\newtheorem{theorem}{Theorem}[section]
\newtheorem{lemma}[theorem]{Lemma}
\newtheorem{proposition}[theorem]{Proposition}
\newtheorem{corollary}[theorem]{Corollary}
\theoremstyle{definition}
\newtheorem{definition}[theorem]{Definition}
\newtheorem{example}[theorem]{Example}
\theoremstyle{remark}
\newtheorem{remark}[theorem]{Remark}
\numberwithin{equation}{section}
\newcommand{\D}{\mathcal{D}}
\newcommand{\T}{\mathcal{T}}
\newcommand{\F}{\mathcal{F}}
\newcommand{\W}{\mathcal{W}}
\newcommand{\ip}[2]{\langle #1, #2 \rangle}
\newcommand{\sbm}[1]{{\let\amp=&\left(\begin{smallmatrix}#1\end{smallmatrix}\right)}}
\DeclareMathOperator{\Hom}{Hom}
\DeclareMathOperator{\mc}{mod}
\DeclareMathOperator{\dimu}{\underline{dim}}
\DeclareMathOperator{\Fac}{Fac}
\DeclareMathOperator{\Sub}{Sub}
\DeclareMathOperator{\End}{End}
\newcommand{\interior}[1]{%
  {\kern0pt#1}^{\mathrm{o}}%
}
\newcommand{\cat}[1]{\mathfrak{C}(#1)}
\newcommand{\tcm}[1]{\mathfrak{W}(#1)}
\newcommand{\tps}[2]{\mathcal{J}_{#1}(#2)}
\newcommand{\rep}[1]{%
  {%
    \tiny%
    \begin{matrix}%
      #1%
    \end{matrix}%
  }%
}
\newcommand{\np}[1]{\nu_{#1}}
\newcommand{\lspan}[1]{\mathrm{span}\{#1\}}
\newcommand{\strr}[2]{\mathrm{s}\tau\text{-}\mathrm{rigid}_{(#2)}#1}
\newcommand{\str}[1]{\mathrm{s}\tau\text{-}\mathrm{rigid}\,#1}
\newcommand{\trc}[1]{\mathfrak{T}(#1)}
\newcommand{\qtrc}[1]{\mathfrak{Q}(#1)}
\newcommand{\st}{:}
\newcommand\@dotsep{4.5}
\def\@tocline#1#2#3#4#5#6#7{\relax
  \ifnum #1>\c@tocdepth % then omit
  \else
    \par \addpenalty\@secpenalty\addvspace{#2}%
    \begingroup \hyphenpenalty\@M
    \@ifempty{#4}{%
      \@tempdima\csname r@tocindent\number#1\endcsname\relax
    }{%
      \@tempdima#4\relax
    }%
    \parindent\z@ \leftskip#3\relax \advance\leftskip\@tempdima\relax
    \rightskip\@pnumwidth plus1em \parfillskip-\@pnumwidth
    #5\leavevmode\hskip-\@tempdima{#6}\nobreak
    \leaders\hbox{$\m@th\mkern \@dotsep mu\hbox{.}\mkern \@dotsep mu$}\hfill
    \nobreak
    \hbox to\@pnumwidth{\@tocpagenum{\ifnum#1=1\fi#7}}\par% 
    \nobreak
    \endgroup
  \fi}
\renewcommand\csname r@tocindent0\endcsname{0pt}
\def\l@subsection{\@tocline{2}{0pt}{2.5pc}{5pc}{}}
\title[Geometric perspective on $\tau$-cluster morphism category]{A geometric perspective on \\ the $\tau$-cluster morphism category}
\keywords{$\tau$-cluster morphism category, wide subcategories, $\tau$-tilting theory, wall-and-chamber structure}
\subjclass{16G10,18G99}
\author[Schroll]{Sibylle Schroll}
\address{Abteilung Mathematik, Department Mathematik/Informatik der Universit\"at zu K\"oln, Weyertal 86-90, 50931 Cologne, Germany}
\email{schroll@math.uni-koeln.de}
\author[Tattar]{Aran Tattar}
\address{Abteilung Mathematik, Department Mathematik/Informatik der Universit\"at zu K\"oln, Weyertal 86-90, 50931 Cologne, Germany}
\email{atattar@uni-koeln.de}
\author[Treffinger]{Hipolito Treffinger}
\address{IMJ-PRG, Universit\'e Paris Cit\'e, B\^atiment Sophie Germain, 5 rue Thomas Mann, 75205 Paris Cedex 13, France}
\email{treffinger@imj-prg.fr}
\author[Williams]{Nicholas J. Williams}
\address{Department of Mathematics and Statistics, Fylde College, Lancaster University, Lancaster, LA1 4YF, United Kingdom}
\email{nicholas.williams@lancaster.ac.uk}
\begin{document}

\begin{abstract}
We show how the $\tau$-cluster morphism category may be defined in terms of the wall-and-chamber structure of an algebra. This geometric perspective leads to a simplified proof that the category is well-defined.
\end{abstract}

\dedicatory{We dedicate this paper to the memory of pure mathematics at Leicester.}

\thanks{HT and SS were supported by the EPSRC through the Early Career Fellowship, EP/P016294/1. HT was supported by the European Union’s Horizon 2020 research and innovation programme under the Marie Sklodowska-Curie grant agreement No 893654 and by the Deutsche Forschungsgemeinschaft (DFG, German Research Foundation) under Germany's Excellence Strategy Programme -- EXC-2047/1 -- 390685813.
NJW is currently supported by EPSRC grant EP/V050524/1. 
SS is supported by the DFG through the project SFB/TRR 191 Symplectic Structures in
Geometry, Algebra and Dynamics (Projektnummer 281071066-TRR 191).}

\maketitle

\section{Introduction}

The $\tau$-cluster morphism category was introduced under the name `cluster morphism category' by Igusa and Todorov \cite{IT17} for hereditary algebras.
The motivation for the introduction of this category was to give a categorical analogue of the picture space defined in \cite{ITW16}.
Indeed, the classifying space of the $\tau$-cluster morphism category is homeomorphic to the picture space in the hereditary case \cite{IT17}. The introduction of the $\tau$-cluster morphism category allowed Igusa and Todorov to show that the picture space is  $K(\pi, 1)$  for $\pi$ the picture group defined in \cite{ITW16} by showing the classifying space of the $\tau$-cluster morphism category is $K(\pi, 1)$.

Since then, the $\tau$-cluster morphism category has received much attention in the literature. The definition of the category was extended to $\tau$-tilting-finite algebras in \cite{BM21_wide}, where it was given the name `a category of wide subcategories'. The name `$\tau$-cluster morphism category' comes from \cite{HI21}, where some of the results of Igusa and Todorov were generalised. The definition of the category was extended to arbitrary finite-dimensional algebras in \cite{BH21}. The category has also been studied using silting theory in \cite{B21}.

In this paper we show how the $\tau$-cluster morphism category arises naturally in the context of the $g$-vector fan of an algebra. The $g$-vector fan of a finite-dimensional algebra was first studied in \cite{DIJ19}. 
It is defined by taking the two-term presilting complexes  and associating a cone to each, which fit together to form the fan. Cones of two-term presilting complexes nicely encode several properties, such as whether the silting objects contain common summands, as well as reflecting the partial order on them \cite{DIJ19}. The $g$-vector fan is a subfan of the wall-and-chamber structure of an algebra, which arises from stability conditions in the sense of King \cite{King94,BST19,Asai2021}. In the representation-finite hereditary case, the wall-and-chamber structure of the algebra was intersected with a sphere around the origin to give the semi-invariant picture studied in \cite{ITW16}.

\begin{theorem}[Theorem~\ref{thm:cat}, Corollary~\ref{cor:tcm}]
Let $A$ be a finite-dimensional algebra. Then there exists a category $\cat{A}$ defined in terms of the $g$-vector fan of $A$ which is equivalent to the $\tau$-cluster morphism category of $A$.
\end{theorem}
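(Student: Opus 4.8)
The plan is to take the objects of $\cat{A}$ to be the cones of the $g$-vector fan $\F_A$ and to let a morphism record how one cone sits as a face of a larger one, then to construct a functor to the $\tau$-cluster morphism category and prove it is an equivalence. First I would fix the bijection between cones of $\F_A$ and $\tau$-rigid pairs, writing $U_\sigma$ for the pair attached to a cone $\sigma$ and $\tps{A}{U_\sigma}$ for its $\tau$-perpendicular category, so that statements can be read interchangeably in fan-theoretic and representation-theoretic terms. With this in hand I define $\Hom_{\cat{A}}(\sigma,\rho)$ to be the set of cones $\gamma \in \F_A$ such that $\sigma$ is a face of $\gamma$ and $\tps{A}{U_\gamma} = \tps{A}{U_\rho}$, with the identity of $\sigma$ given by $\gamma = \sigma$.

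The key geometric input -- and the step I expect to be the main obstacle -- is a fan-theoretic form of $\tau$-tilting reduction, which I will call the \emph{reduction lemma}: for each cone $\sigma$, the $g$-vector fan of $\tps{A}{U_\sigma}$ is canonically isomorphic to the star of $\sigma$ in $\F_A$, that is, to the cones having $\sigma$ as a face, read modulo the linear span of $\sigma$. Crucially this must be compatible with iteration: if $\sigma$ is a face of $\gamma$, then the cone of $\F_{\tps{A}{U_\sigma}}$ corresponding to $\gamma$ should have perpendicular category $\tps{A}{U_\gamma}$, so that $\tps{A}{U_\gamma} = \tps{\tps{A}{U_\sigma}}{\overline{U}_\gamma}$. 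I would obtain this by upgrading the $\tau$-tilting reduction of Jasso and Demonet--Iyama--Jasso from an equivalence $\tps{A}{U_\sigma} \simeq \mc B$ to a matching of $g$-vector fans, with the star of $\sigma$ realising $\F_B$. The delicate points are establishing this over an arbitrary finite-dimensional algebra and making the identification natural enough to be iterated without auxiliary choices.

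Granting the reduction lemma, composition in $\cat{A}$ becomes geometric and its well-definedness transparent, which is the promised simplification. Given $\gamma_1 \in \Hom_{\cat{A}}(\sigma_0,\sigma_1)$ and $\gamma_2 \in \Hom_{\cat{A}}(\sigma_1,\sigma_2)$, the equality $\tps{A}{U_{\gamma_1}} = \tps{A}{U_{\sigma_1}}$ lets me transport $\gamma_2$, a cone in the star of $\sigma_1$, to the unique cone $\gamma_3$ in the star of $\gamma_1$ with the same image under the reduction lemma. Then $\sigma_0$ is a face of $\gamma_3$ and the compatibility clause gives $\tps{A}{U_{\gamma_3}} = \tps{A}{U_{\sigma_2}}$, so $\gamma_3$ is the composite. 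Associativity and the unit axioms then reduce to the coherence of nested stars, a purely fan-theoretic bookkeeping that avoids the usual module-theoretic verifications.

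Finally I would define a functor $F$ from $\cat{A}$ to the $\tau$-cluster morphism category by $F(\sigma) = \tps{A}{U_\sigma}$, sending a morphism $\gamma \colon \sigma \to \rho$ to the morphism labelled by the $\tau$-rigid pair in $\tps{A}{U_\sigma}$ that corresponds to $\gamma$ under the reduction lemma; functoriality is the translation of the previous paragraph. Essential surjectivity is immediate, since every object of the $\tau$-cluster morphism category is some $\tps{A}{U}$, hit by the cone of $U$. For full faithfulness I would note that $\Hom_{\cat{A}}(\sigma,\rho) = \{\gamma \succeq \sigma : \tps{A}{U_\gamma} = \tps{A}{U_\rho}\}$ is carried bijectively, again by the reduction lemma, onto the set of $\tau$-rigid pairs $V$ in $\tps{A}{U_\sigma}$ with $\tps{\tps{A}{U_\sigma}}{V} = \tps{A}{U_\rho}$, which is exactly the corresponding $\Hom$-set in the $\tau$-cluster morphism category. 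Together these yield the claimed equivalence.
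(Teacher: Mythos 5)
Your construction is essentially the paper's, reorganised. Both build the category from the cones $\mathcal{C}_{(M,P)}$ of the $g$-vector fan, both encode a morphism as a cone in the star of the source, and both rest on exactly the ``reduction lemma'' you single out as the main obstacle. That lemma is not something to prove from scratch: it is Asai's theorem \cite[Lemma~4.4, Theorem~4.5]{Asai2021} (recalled in Subsection~\ref{sect:red_wcs}), combined with the identification of the star of $\mathcal{C}_{(M,P)}$ with Asai's neighbourhood $N_{(M,P)}$ (Lemma~\ref{lem:order=N}) and of Asai's linear map $\pi$ with the orthogonal projection $\np{\mathcal{C}_{(M,P)}}$ up to a canonical isomorphism (Lemma~\ref{lem:proj=red}). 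The organisational difference is where the work lands. The paper quotients both objects (cones with the same wide subcategory) and morphisms (face relations with the same projection), so that composition is literally composition of face inclusions, $f_{FG}\circ f_{EF}=f_{EG}$, trivially associative, and all the effort goes into well-definedness on equivalence classes; it then routes the equivalence with $\tcm{A}$ through an intermediate category $\qtrc{A}$ of $\tau$-rigid pairs. You keep every cone as a separate object and build the transport into composition itself, so for you associativity carries the content. Your claim that this is then ``purely fan-theoretic bookkeeping'' undersells it: the coherence of nested stars is precisely the iteration clause of your reduction lemma, which module-theoretically is $\tps{\tps{}{M,P}}{N,Q}=\tps{}{M\oplus\widetilde{N},P\oplus\widetilde{Q}}$ together with the compatibility $\mathcal{E}^{\mc A}_{(M\oplus M',P\oplus P')}=\mathcal{E}^{\mathcal{W}}_{(N',Q')}\circ\mathcal{E}^{\mc A}_{(M,P)}$ of \cite[Theorem~5.9]{BM21_wide} and \cite[Theorems~6.4 and~6.12]{BH21}, and geometrically is the convexity argument in the paper's second well-definedness lemma. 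Granted these inputs, your functor and the full-faithfulness argument go through as described and yield a category equivalent to the paper's $\cat{A}$.
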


We define the category $\cat{A}$ in Definition~\ref{def:cat} and show in Section 4 that it is equivalent to the $\tau$-cluster morphism category by constructing an intermediate category which is equivalent to both $\cat{A}$ and the $\tau$-cluster morphism category. 
The difficulty in proving that the $\tau$-cluster morphism category is well-defined lies in showing that composition in the category is associative. 
The original proof of this was given in \cite{BM21_wide}. More conceptual proofs of this are in given in \cite{BH21} and \cite{B21}, the latter based on silting theory. In this paper, using the $g$-vector fan, we give a geometrical construction of the $\tau$-cluster morphism category. 
The associativity is then a direct consequence of the construction.
Our definition of the category is motivated by \cite[Proposition~6.5]{MST23}, see Remark~\ref{rmk:stablemorph}.

This paper is structured as follows. We begin in Section~\ref{sec:background} by giving the relevant background of the paper. This consists of background on $\tau$-tilting theory, the $\tau$-cluster morphism category, and the $g$-vector fan of a finite-dimensional algebra. In Section~\ref{sect:category}, we introduce the category defined from the $g$-vector fan of the algebra, which we show to be equivalent to the $\tau$-cluster morphism category in Section~\ref{sect:rel_w_tcm}.

\subsection*{Acknowledgements} 

This paper originated from a working group in  pure mathematics at the University of Leicester.  In solidarity with the other former pure mathematics researchers at the University of Leicester, we dedicate this paper to them and to pure mathematics.

\section{Background}\label{sec:background}

Let $A$ be a finite-dimensional algebra of rank $n$ over a field $K$ and $\mc A$ the category of finitely generated $A$-modules. 
We assume that every subcategory will  be full and closed under isomorphisms. 
A subcategory $\mathcal{X}$ of $\mc A$ is functorially finite if for every object $M \in \mc A$ there are objects $X_M$ and $_MX$ in $\mathcal{X}$  and morphisms $ X_M \to M$ and $M \to {}_MX$ such that for any $Y \in \mathcal{X}$ there are surjections
\[
\Hom_A(Y,X_M) \to \Hom_A(Y, M) \]
\[ \Hom_A( {}_MX,Y) \to \Hom_A(M,Y) 
\]

\subsection{$\tau$-tilting theory}\label{sect:tau_tilt}

In this subsection we give a brief overview of some general results in $\tau$-tilting theory. 
For a more comprehensive survey of $\tau$-tilting theory, see \cite{TSurvey}.

\subsubsection{Torsion pairs}

Torsion pairs were introduced by Dickson to generalise the structure given by torsion and torsion-free abelian groups to arbitrary abelian categories \cite{Dickson1966}. A \emph{torsion pair} is a pair of full subcategories $(\mathcal{T}, \mathcal{F})$ of $\mc A$ such that
\begin{enumerate}
    \item $\Hom_{A}(\mathcal{T}, \mathcal{F}) = 0$;
    \item if $\Hom_{A}(T, \mathcal{F}) = 0$, then $T \in \mathcal{T}$;
    \item if $\Hom_{A}(\mathcal{T}, F) = 0$, then $F \in \mathcal{F}$.
\end{enumerate}
Here $\mathcal{T}$ is called the \emph{torsion class} and $\mathcal{F}$ is called the \emph{torsion-free class}. More generally, a full subcategory $\mathcal{T}$ is called a torsion class if it is a torsion class in some torsion pair, and likewise for torsion-free classes.

\subsubsection{$\tau$-tilting and $\tau$-rigid pairs}

We now define $\tau$-rigid and $\tau$-tilting pairs, following \cite[Definition 0.1 and 0.3]{AIR14}. Let $M$ be an $A$-module and let $P$ be projective in $\mc A$.
We say that $M$ is \emph{$\tau$-rigid} if $\Hom_{A}(M, \tau M)=0$.
The pair $(M,P)$ is said to be \emph{$\tau$-rigid} if $M$ is $\tau$-rigid and $\Hom_{A}(P, M)=0$.
We say moreover that a $\tau$-rigid pair $(M,P)$ is \emph{$\tau$-tilting} if $|M| + |P| = n$.
Here we denote by $|X|$ the number isomorphism classes of direct summands of $X$.        
For two $\tau$-rigid pairs $(M,P)$ and $(N,Q)$ we say that $(M,P)$ \emph{is a direct summand of $(N,Q)$} if $M$ is a direct summand of $N$ and $P$ is a direct summand of $Q$. 

Given a module $M$, we define the two subcategories
\begin{align*}
M^\perp &:= \{X \in \mc A \st \Hom_A(M, X) = 0\},\\
\prescript{\perp}{}{M} &:= \{ X \in \mc A \st \Hom_A(X, M) = 0\}.
\end{align*}
For a $\tau$-rigid pair $(M, P)$, we define two torsion classes $\mathcal{T}_{(M, P)} := \Fac M$ and $\overline{\mathcal{T}}_{(M, P)} := \prescript{\perp}{}{\tau M} \cap P^{\perp}$. 
We have that $\mathcal{T}_{(M, P)} \subseteq \overline{\mathcal{T}}_{(M, P)}$, see \cite[Subsection~2.2]{AIR14}. 
These two torsion classes come in two torsion pairs $(\Fac M, M^{\perp})$ and $(\prescript{\perp}{}{\tau M} \cap P^{\perp}, \Sub \tau M)$. 
We define $\mathcal{F}_{(M, P)} = \Sub \tau M$ and $\overline{\mathcal{F}}_{(M, P)} = M^{\perp}$, where likewise $\mathcal{F}_{(M, P)} \subseteq \overline{\mathcal{F}}_{(M, P)}$. 
We can also construct the so-called \emph{$\tau$-perpendicular subcategory} of $(M, P)$, which was first introduced in \cite{Jasso15}.
This is the category $\tps{}{M, P} := \overline{\mathcal{T}}_{(M, P)} \cap \overline{\mathcal{F}}_{(M, P)} =  M^\perp \cap {}^\perp \tau M \cap P^\perp$, which therefore measures the difference between these two torsion pairs.

A key result in \cite{AIR14} states that there is a bijection between the functorially finite torsion classes and $\tau$-tilting pairs in $\mc A$.
Given a $\tau$-rigid pair $(M,P)$ we say that the $\tau$-tilting pair associated to $\overline{\mathcal{T}}_{(M, P)}$ is the \emph{Bongartz completion} of $(M,P)$. In fact, the Bongartz completion of $(M,P)$ is of the form $(M\oplus T, P)$ for some $\tau$-rigid module $T$. In this case we say that $T$ is the \emph{Bongartz complement} of $(M,P)$.

\subsubsection{$\tau$-tilting reduction}\label{reduction}

It is shown in \cite[Theorem~3.8]{Jasso15} that if $(M, P)$ is a $\tau$-rigid pair, then there is an equivalence of categories 
\begin{equation}\label{phi}\phi\colon \tps{}{M, P} \to \mc B_{(M, P)},\end{equation}
between the $\tau$-perpendicular subcategory and the module category of an algebra $B_{(M, P)}$ that can be constructed explicitly from $(M,P)$.
The process of going from the original algebra $A$ to the algebra $B_{(M, P)}$ is known as \emph{$\tau$-tilting reduction} and the algebra $B_{(M,P)}$ is known as the \emph{$\tau$-tilting reduction algebra} of $A$ by $(M,P)$.

A full subcategory $\mathcal{W}$ of $\mc A$ is said to be \emph{wide} if it is closed under kernels, cokernels and extensions.
An important example of a wide subcategory is the $\tau$-perpendicular subcategory of a $\tau$-rigid pair.
Indeed, it has been shown that $\tps{}{M, P}$ is a functorially finite wide subcategory of $\mc A$ for every $\tau$-rigid pair $(M,P)$ \cite[Corollary 3.22]{BST19} \cite[Theorem~4.12]{DIRRT}. 
Moreover, every wide subcategory is of this form if and only if $A$ is $\tau$-tilting finite, that is, if there are finitely many isomorphism classes of indecomposable $\tau$-rigid modules \cite[Corollary~3.11]{MS17}.

Since the $\tau$-perpendicular subcategories $\tps{}{M, P}$ are equivalent to the module categories $\mc B_{(M,P)}$, they have their own Auslander--Reiten translate $\tau_{\tps{}{M, P}}$.
In this context, given a $\tau_{\tps{}{M, P}}$-rigid pair $(M', P')$ inside $\tps{}{M, P}$, the $\tau_{\tps{}{M, P}}$-perpend\-icular subcategory of $(M', P')$ is denoted $\tps{\tps{}{M, P}}{M', P'}$.

Let $\mathcal{W}=\tps{}{\tilde{M}, \tilde{P}}$ be a functorially finite wide subcategory of $\mc A$, for a $\tau$-rigid pair $(\tilde{M}, \tilde{P})$ in $\mc A$. 
Given a $\tau$-rigid pair $(M, P)$ in $\mathcal{W}$,  let \[\strr{\mathcal{W}}{M, P} := \biggl\{\parbox{3.4cm}{\centering Basic $\tau$-rigid pairs \\ $(N, Q)$ of $\mathcal{W}$} \st \parbox{3.6cm}{\centering $(M, P)$ is a direct \\ summand of $(N, Q)$}\biggr\}.\] 
We further let $\str{\mathcal{W}} := \strr{\mathcal{W}}{0,0}$. Buan and Marsh \cite{BM21_tau,BM21_wide} show how  $\str{\tps{\mathcal{W}}{M, P}}$ is related to $\strr{\mathcal{W}}{M, P}$, as explained in \cite[Section~5]{BH21}. Namely, there is a bijection \[\mathcal{E}_{(M, P)}^{\mathcal{W}}\colon \strr{\mathcal{W}}{M, P} \to \str{\tps{\mathcal{W}}{M, P}}.\]

\subsubsection{The $\tau$-cluster morphism category}\label{subsec:tcmcat}

As we will shortly explain in detail, the $\tau$-cluster morphism category has as its objects the $\tau$-perpendicular subcategories of $\mc A$, with morphisms given by reduction with respect to $\tau$-rigid pairs in these categories. 
Here we follow the approach in \cite{BH21}. Let $A$ be a finite-dimensional algebra. The \emph{$\tau$-cluster morphism category} $\tcm{A}$ is defined as follows.
\begin{enumerate}
\item The objects of $\tcm{A}$ are the $\tau$-perpendicular subcategories of $\mc A$.
\item Given a $\tau$-perpendicular subcategory $\mathcal{W} \subseteq \mc A$ and a basic $\tau$-rigid pair $(M, P)$ in $\mathcal{W}$, we define a formal symbol $g_{(M, P)}^{\mathcal{W}}$.
\item For two $\tau$-perpendicular subcategories $\mathcal{W}_{1}$ and $\mathcal{W}_{2}$ of $\mc A$, define 
\[\Hom_{\tcm{A}}(\mathcal{W}_{1}, \mathcal{W}_{2}) = \biggl\{ g_{(M, P)}^{\mathcal{W}_{1}} \st \parbox{6cm}{\centering $(M, P)$ is a basic $\tau$-rigid pair in $\mathcal{W}_{1}$ and $\mathcal{W}_{2} = \tps{\mathcal{W}_{1}}{M, P}$} \biggr\}.\]
\item Given $g_{(M, P)}^{\mathcal{W}_{1}}\colon \mathcal{W}_{1} \to \mathcal{W}_{2}$ and $g_{(N, Q)}^{\mathcal{W}_{2}}\colon \mathcal{W}_{2} \to \mathcal{W}_{3}$ in $\tcm{A}$, we denote \[(\widetilde{N}, \widetilde{Q}) := \left( \mathcal{E}_{(M, P)}^{\mathcal{W}_{1}} \right)^{-1}(N, Q).\] The composition of the two morphisms is then defined as \[g_{(N, Q)}^{\mathcal{W}_{2}} \circ g_{(M, P)}^{\mathcal{W}_{1}} = g_{(M \oplus \widetilde{N}, P \oplus \widetilde{Q})}^{\mathcal{W}_{1}}.\]
\end{enumerate}

\subsection{The wall-and-chamber structure of an algebra}\label{sect:back:w&c}

The $\tau$-tilting theory of a finite-dimensional algebra with $n$ isomorphism classes of simple modules $\{S(1), \dots, S(n)\}$ is related to a certain wall-and-chamber structure of $\mathbb{R}^n$, as we now explain. We will interpret the $\tau$-cluster morphism category in terms of this structure.

We denote by $K_{0}(A)$ the Grothendieck group of $\mc A$. This is a free abelian group of rank $n$.
Given an $A$-module $M$, we write $[M]$ for the class of $M$ in $K_{0}(A)$, which we identify with a vector in $\mathbb{Z}^n$ via the isomophism $\Phi: K_0(A) \to \mathbb{Z}^n$ defined by $\Phi([S(i)]) = e_i$ where  $\{e_1, \dots, e_n\}$ is the canonical basis of $\mathbb{R}^n$.
If $A=KQ/I$ is a bounded path algebra of a quiver $Q$, we have  $[M] = \dimu M$, the dimension vector of $M$ as a quiver representation. 
In this paper we write $\dimu M = \Phi([M])$.
By $\ip{-}{-}$, we mean the standard inner product on $\mathbb{R}^{n}$.

Recall the notion of stability from \cite{King94}. Given $v \in \mathbb{R}^{n}$, we say that a non-zero $A$-module $M$ is \emph{$v$-semistable} if $\ip{v}{\dimu M} = 0$ and $\ip{v}{\dimu N} \geqslant 0$ for every factor module $N$ of $M$. 
If $M$ is $v$-semistable and $\ip{v}{\dimu N} \neq 0$ for all proper factor modules $N$ of $M$
, we say that $M$ is \emph{$v$-stable}. The \emph{stability space} of an $A$-module $M$ is then defined to be \[\D(M) := \{v \in \mathbb{R}^{n} \st M \text{ is }v\text{-semistable}\}.\] The \emph{wall-and-chamber} structure of the algebra $A$ is the cone complex \[\bigcup_{M
 \in \mc A \setminus \{0\}}\D(M).\] Intersecting this cone complex with a sphere around the origin gives what was called the ``semi-invariant picture'' in the representation-finite hereditary case in \cite{ITW16}.

To investigate the wall-and-chamber structure, it is useful to consider the following torsion and torsion-free classes from \cite[Subsection 3.1]{BKT}---see also \cite[Lemma~6.6]{BridgelandScat}.
For $v\in \mathbb{R}^n$, we have the torsion classes 
\[\overline{\T}_{v}=\{M\in \mc A \st \langle v,\dimu N\rangle \geq 0 \text{ for every quotient $N$ of $M$}\}\] 
and 
\[\T_{v}=\{M\in \mc A \st \langle v, \dimu N\rangle > 0 \text{ for every quotient $N \neq 0$ of $M$}\},\] 
and we have the torsion-free classes 
\[\overline{\F}_{v}=\{M\in \mc A \st \langle v, \dimu L\rangle \leqslant 0 \text{ for every submodule $L$ of $M$}\}\] 
and 
\[\F_{v} = \{M\in \mc A \st \langle v,\dimu L\rangle < 0 \text{ for every submodule $L \neq 0$ of $M$}\}.\]
Moreover, both $(\overline{\T}_{v}, \F_{v})$ and $(\T_{v}, \overline{\F}_{v})$ are torsion pairs \cite[Proposition~3.1]{BKT}.
Following \cite{Asai2021}, we say that $v, v' \in \mathbb{R}^n$ are \emph{TF-equivalent} if $\overline{\T}_v = \overline{\T}_{v'}$ and $\overline{\F}_v = \overline{\F}_{v'}$.
It is clear that TF-equivalence is an equivalence relation.
Moreover, it was shown in \cite[Lemma~2.14]{Asai2021} that every TF-equivalence class is convex, and hence connected, in $\mathbb{R}^n$.
The category of $v$-semistable objects is $\mathcal{W}_{v} = \overline{\mathcal{T}}_{v} \cap \overline{\mathcal{F}}_{v}$.
It follows from \cite[Proposition~3.24]{BST19} that $\W_v$ is always a wide subcategory of $\mc A$.
Note that, by definition $\overline{\T}_v = \overline{\T}_{v'}$ and $\overline{\F}_v = \overline{\F}_{v'}$ for every $v,v'$ in every TF-equivalence class $E$. 
By abuse of notation, we denote by $\overline{\T}_E$ the torsion class $\overline{\T}_v$ for any $v \in E$. 
Likewise, we denote by $\overline{\F}_E$ the torsion-free class $\overline{\F}_v$ for every $v \in E$.
In particular, we can associate to each TF-equivalence $E$ the subcategory $\W_E = \overline{\T}_E\cap  \overline{\F}_E \subset \mc A$.
These subcategories will be instrumental in defining the $\tau$-cluster morphism category from the wall-and-chamber structure.

\subsubsection{From $\tau$-tilting theory to the wall-and-chamber structure}\label{sect:tau_wcs}

Let $M$ be an $A$-module. 
Choose the minimal projective presentation \[P_{-1}\longrightarrow P_0\longrightarrow M\longrightarrow 0\] of $M$, where $P_0=\bigoplus_{i=1}^n P(i)^{a_i}$ and $P_{-1} = \bigoplus_{i=1}^n P(i)^{b_i}$ and $\{P(1), P(2),$ $\dots, P(n)\}$ is a complete set of isomorphism-class representatives of the indecomposable projective $A$-modules. 
Then the \emph{$g$-vector} of $M$ is defined as \[g^M=(a_1-b_1, a_2-b_2,\dots, a_n-b_n).\]
The $g$-vector of a $\tau$-rigid pair $(M,P)$ is defined as $ g^{M}-g^{P}$.

\begin{remark}
We note that $g$-vectors can also viewed as the elements of the Grothendieck group of an extriangulated category $K^{[-1,0]}(\operatorname{proj} A)$ which is naturally associated to $A$, see \cite[Proof of Proposition~4.44]{PPPP}. 
\end{remark}

Consider now a basic $\tau$-rigid pair $(M,P)$ where $M=\bigoplus_{i=1}^kM_i$ and $P=\bigoplus_{j=k+1}^t P_j$ are the decomposition of $M$ and $P$ as sums of indecomposable modules, respectively. 
We define the polyhedral cones $\mathcal{C}_{(M,P)}$ and $\overline{\mathcal{C}}_{(M,P)}$  associated to $(M,P)$ to be the sets \[\mathcal{C}_{(M,P)}=\biggl\{\sum_{i=1}^{k} \alpha_i g^{M_i}-\sum_{j=k+1}^{t} \alpha_j g^{P_j} \st \alpha_i > 0 \text{ for every $1\leqslant i\leqslant t$}\biggr\},\] 
\[\overline{\mathcal{C}}_{(M,P)}=\biggl\{\sum_{i=1}^{k} \alpha_i g^{M_i}-\sum_{j=k+1}^{t} \alpha_j g^{P_j} \st \alpha_i \geq 0 \text{ for every $1\leqslant i\leqslant t$}\biggr\},\]
where $\{g^{M_1},\dots, g^{M_k}, -g^{P_{k+1}}, \dots, -g^{P_t}\}$ is the set of $g$-vectors for the indecomposable summands of $(M,P)$. 
Note that $\overline{\mathcal{C}}_{(M,P)}$ coincides with the closure of $\mathcal{C}_{(M,P)}$ with respect to the canonical topology in $\mathbb{R}^n$.
It is shown in \cite{DIJ19} that the set
\[  \bigcup_{(M,P) \in \str{A}} \overline{\mathcal{C}}_{(M,P)} \]
forms a polyhedral fan in $\mathbb{R}^n$.

It is shown in \cite{BST19, Asai2021} that if $(M, P)$ is a $\tau$-rigid pair, then the cone $\mathcal{C}_{(M,P)}$ is a TF-equivalence class and, moreover, \[\W_{\mathcal{C}_{(M,P)}}= \tps{}{M, P}.\] That is, the wide subcategory associated to the cone $\mathcal{C}_{(M, P)}$ is the $\tau$-perpendicular subcategory of $(M, P)$. Furthermore, \cite[Theorem~4.7]{Asai2021} shows that an algebra is $\tau$-tilting-finite if and only if every TF-equivalence class is of the form $\mathcal{C}_{(M, P)}$ for a $\tau$-rigid pair $(M, P)$.

\subsubsection{$\tau$-tilting reduction and the wall-and-chamber structure}\label{sect:red_wcs}

The relation between the wall-and-chamber structures and $\tau$-tilting reduction is studied in \cite[Section~4]{Asai2021}, as we now explain. See also \cite{AHIKM2022}.
Following \cite[Section~4]{Asai2021}, for a $\tau$-rigid pair $(M, P)$, we define a subset $N_{(M, P)} \subset \mathbb{R}^{n}$ by \[N_{(M, P)} := \{v \in \mathbb{R}^{n} \st \mathcal{T}_{(M, P)} \subseteq \mathcal{T}_{v} \subseteq \overline{\mathcal{T}}_{v} \subseteq \overline{\mathcal{T}}_{(M, P)}\}.\] If $v \in N_{(M, P)}$, then $\overline{\mathcal{F}}_{v} \subseteq \overline{\mathcal{F}}_{(M, P)}$, and so $\mathcal{W}_{v} \subseteq \tps{}{M, P}$. It is clear from the definition that $N_{(M, P)}$ is a union of TF-equivalence classes in~$\mathbb{R}^{n}$. 
It can be thought of as the union of the TF-equivalence classes surrounding $\mathcal{C}_{(M, P)}$.

Let $B = B_{(M, P)}$ be the $\tau$-tilting reduction of $A$ with respect to $(M, P)$. Further, let $\{X_{1}, X_{2}, \dots, X_{m}\}$ be the simple objects of $\tps{}{M, P}$. 
When we use the term `simple object', we mean the simple objects of $\tps{}{M, P}$ as an abelian category, rather than the simple $A$-modules which lie in $\tps{}{M, P}$.
There is a linear map $\pi = \pi_{(M, P)}\colon \mathbb{R}^{n} \to \mathbb{R}^{m}$ defined 
\begin{equation}\label{eq:pi}
\pi(v)_{i} = \frac{\ip{v}{\dimu X_{i}}}{d_{i}},
\end{equation}
where $\pi(v)_{i}$ means the $i$-th coordinate of $\pi(v)$ and $d_{i} = \dim_{K} \End_{A}(X_{i})$. The map $\pi$ has the following properties \cite[Lemma~4.4, Theorem~4.5]{Asai2021}, recalling from Subsection~\ref{reduction} \eqref{phi} the equivalence of categories $\phi$:
\begin{enumerate}
\item The restriction $\pi|_{N_{(M, P)}}\colon N_{(M, P)} \to \mathbb{R}^{m}$ is surjective.
\item For any $v \in N_{(M, P)}$, we have
\begin{align*}
\phi(\overline{\mathcal{T}}_{v}) &= \overline{\mathcal{T}}_{\pi(v)}, & \phi(\mathcal{F}_{v}) &= \mathcal{F}_{\pi(v)}, \\
\phi(\mathcal{T}_{v}) &= \mathcal{T}_{\pi(v)}, & \phi(\overline{\mathcal{F}}_{v}) &= \overline{\mathcal{F}}_{\pi(v)}, & \phi(\mathcal{W}_{v}) &= \mathcal{W}_{\pi(v)}.
\end{align*}
\item For any $v \in N_{(M, P)}$ and $L \in \tps{}{M, P}$, the wall $\D(\phi(L))$ coincides with $\pi(\D(L) \cap N_{(M, P)})$.
\item The map $\pi$ induces a bijection between TF-equivalence classes in $N_{(M, P)}$ and TF-equivalence classes for $\mc B_{(M, P)}$ in $\mathbb{R}^{m}$.
\end{enumerate}
This interpretation of $\tau$-tilting reduction will be key to our construction of the $\tau$-cluster morphism category in terms of the wall-and-chamber structure.

\section{A category associated to the wall-and-chamber structure}\label{sect:category}

We begin by constructing a poset from the set of TF-equivalence classes of the form $\mathcal{C}_{(M,P)}$ in the wall-and-chamber structure for a $\tau$-rigid pair $(M,P)$.
We then use this poset to construct a category $\cat{A}$, which we later show to be equivalent to the $\tau$-cluster morphism category.
To this end, we denote by $TF_A$ the set of all TF-equivalence classes in the wall-and-chamber structure of $A$ of the form $\mathcal{C}_{(M,P)}$ for a $\tau$-rigid pair $(M,P)$ in $\mc A$.

\begin{proposition}
The relation $E \leqslant E'$ if $E \subseteq \overline{E'}$ for TF-equivalence classes $E, E' \in TF_A$ induces a partial order on $TF_A$.
\end{proposition}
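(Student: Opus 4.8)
The plan is to verify the three axioms of a partial order---reflexivity, transitivity, and antisymmetry---on $TF_A$, regarding each class $E = \mathcal{C}_{(M,P)}$ as a subset of $\mathbb{R}^n$ whose closure $\overline{E}$ is the cone $\overline{\mathcal{C}}_{(M,P)}$. Reflexivity and transitivity I expect to be immediate from the general properties of the topological closure operator, namely that $X \subseteq \overline{X}$ and that $X \subseteq \overline{Y}$ implies $\overline{X} \subseteq \overline{Y}$ (monotonicity together with idempotence). Reflexivity is just $E \subseteq \overline{E}$; for transitivity, from $E \subseteq \overline{E'}$ and $E' \subseteq \overline{E''}$ I would pass to $\overline{E'} \subseteq \overline{E''}$ and chain the inclusions to obtain $E \subseteq \overline{E''}$, i.e.\ $E \leqslant E''$.

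The work lies in antisymmetry. Given $E \leqslant E'$ and $E' \leqslant E$, that is $E \subseteq \overline{E'}$ and $E' \subseteq \overline{E}$, applying closure to both inclusions yields $\overline{E} \subseteq \overline{E'}$ and $\overline{E'} \subseteq \overline{E}$, hence $\overline{E} = \overline{E'}$. It then remains to argue that a class in $TF_A$ is determined by its closure; concretely, I would show that each $E \in TF_A$ coincides with the relative interior of $\overline{E}$, so that $\overline{E} = \overline{E'}$ forces $E = E'$ at once.

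This last point is the crux, and the step I expect to be the main obstacle. By definition $E = \mathcal{C}_{(M,P)}$ consists of the strictly positive combinations of the $g$-vectors $\{g^{M_1}, \dots, g^{M_k}, -g^{P_{k+1}}, \dots, -g^{P_t}\}$ of the indecomposable summands of $(M,P)$, whereas $\overline{E} = \overline{\mathcal{C}}_{(M,P)}$ consists of the non-negative combinations. Recalling that these $g$-vectors are linearly independent \cite{DIJ19}, the cone $\overline{\mathcal{C}}_{(M,P)}$ is simplicial, and for a simplicial cone the strictly positive combinations of the generators are precisely its relative interior; this identifies $E$ with the relative interior of $\overline{E}$ and settles antisymmetry. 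Alternatively, one could invoke the fact that $\bigcup_{(M,P)} \overline{\mathcal{C}}_{(M,P)}$ is a polyhedral fan, so that the relative interiors of its cones partition the support and $E$ is thereby recovered from $\overline{E}$.
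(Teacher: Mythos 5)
Your proof is correct, and reflexivity and transitivity are handled exactly as in the paper (via monotonicity and idempotence of topological closure). Where you diverge is antisymmetry. The paper argues as follows: distinct TF-equivalence classes are disjoint, so if $E \leqslant E'$ with $E \neq E'$ then $E \subseteq \overline{E'} \setminus E'$, forcing $\dim E < \dim E'$; a strict dimension drop in both directions is impossible, so antisymmetry follows. You instead pass to $\overline{E} = \overline{E'}$ and recover $E$ from $\overline{E}$ as the relative interior of the simplicial cone $\overline{\mathcal{C}}_{(M,P)}$, using the linear independence of the $g$-vectors of the indecomposable summands. Both arguments ultimately lean on the same polyhedral input (the cones $\mathcal{C}_{(M,P)}$ are relatively open simplicial cones), but they package it differently: the paper's dimension count exploits the disjointness of equivalence classes and does not need to identify $E$ inside $\overline{E}$ explicitly, whereas your argument is a clean piece of convex geometry that moreover establishes the slightly stronger and independently useful fact that each $E \in TF_A$ is determined by its closure. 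Your fallback via the fan structure of \cite{DIJ19} is also valid. One cosmetic point: when invoking the relative-interior description, it is worth noting separately the degenerate case $E = \mathcal{C}_{(0,0)} = \{0\}$, where the claim holds trivially.
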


\begin{proof}
It is clear that the relation $\leqslant$ is reflexive.
To show that the relation $\leqslant$ is transitive, suppose that $E, E', E'' \in TF_A$ such that $E\leqslant E'$ and $E' \leqslant E''$.
Then $E \subset \overline{E'} \subset \overline{\overline{E''}}= \overline{E''}$, and so $E \leqslant E''$. To show anti-symmetry, note that, since the TF-equivalence classes are disjoint, we have that if $E \leqslant E'$, then $E \subseteq \overline{E'} \setminus E'$, and so $E$ has dimension strictly smaller than $E'$. This implies that the relation $\leqslant$ must be anti-symmetric.
\end{proof}
Note that this is in fact the standard partial order on the strata of a stratified topological space---see, for instance, \cite[Section~2.1]{Woolf10}.

It is a well-known fact that every poset can be seen as a category where the objects of the category correspond to the elements of the set. The morphisms are determined by the partial order: that is, there is a unique morphism $E \to E'$ whenever $E \leqslant E'$. 
In particular, we have that $TF_A$ with the partial order defined above gives rise to a category.
Note that in this case the category $TF_A$ always has an initial object, namely the TF-equivalence $\mathcal{C}_{(0,0)}$, consisting only of the origin of $\mathbb{R}^n$, and no terminal object.
We write $f_{EE'}$ for the unique morphism from $E$ to $E'$ which exists when $E \leqslant E'$.

\begin{lemma}\label{lem:order=N}
Let $E,E' \in TF_A$. 
Then $E \leqslant E'$ if and only if $E' \subseteq N_{E}$.
\end{lemma}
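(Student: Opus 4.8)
The plan is to reformulate both sides of the equivalence in terms of the torsion classes attached to $E$ and $E'$, and then prove the two implications separately: the forward one by a semicontinuity argument, and the backward one by exhibiting an explicit family of points of $E'$ degenerating onto $E$. First I would record the bookkeeping. On any TF-equivalence class all four classes $\T_v, \overline{\T}_v, \F_v, \overline{\F}_v$ are constant, since $(\T_v, \overline{\F}_v)$ and $(\overline{\T}_v, \F_v)$ are torsion pairs and each member of a torsion pair determines its partner; write $\T_E, \overline{\T}_E, \F_E$ for the common values on $E$. Writing $E = \mathcal{C}_{(M,P)}$, I would use that $\T_E = \T_{(M,P)}$ and $\overline{\T}_E = \overline{\T}_{(M,P)}$ (as in \cite{BST19,Asai2021}), so that $N_E = N_{(M,P)} = \{v : \T_E \subseteq \T_v \subseteq \overline{\T}_v \subseteq \overline{\T}_E\}$. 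As the torsion classes are constant on $E'$, this gives the reformulation that $E' \subseteq N_E$ holds if and only if $\T_E \subseteq \T_{E'}$ and $\overline{\T}_{E'} \subseteq \overline{\T}_E$.

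For the forward implication, assume $E \leqslant E'$, fix $v \in E$, and choose $v_k \in E'$ with $v_k \to v$. If $X \in \overline{\T}_{E'}$ then $\ip{v_k}{\dimu Y} \geqslant 0$ for every quotient $Y$ of $X$ and every $k$; letting $k \to \infty$ gives $\ip{v}{\dimu Y} \geqslant 0$, so $X \in \overline{\T}_E$, whence $\overline{\T}_{E'} \subseteq \overline{\T}_E$. Conversely, if $X \in \T_E = \T_v$, then $\ip{v}{\dimu Y} > 0$ on the finitely many dimension vectors of nonzero quotients $Y$ of $X$, so the same strict inequalities hold for $v_k$ once $k$ is large, giving $X \in \T_{E'}$ and hence $\T_E \subseteq \T_{E'}$. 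By the reformulation, $E' \subseteq N_E$.

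For the backward implication I would use the reduction map $\pi = \pi_{(M,P)}$ and the bijection of property~(4). Fix $v \in E$ and any $v'_0 \in E' \subseteq N_E$; the crucial point is that the whole ray $v + t v'_0$, for $t \geqslant 0$, lies in $N_E$. Indeed, for $X \in \T_{(M,P)} = \T_v \subseteq \T_{v'_0}$ both $\ip{v}{\dimu Y}$ and $\ip{v'_0}{\dimu Y}$ are positive on nonzero quotients $Y$, so their positive combination is too, yielding $\T_{(M,P)} \subseteq \T_{v + tv'_0}$; dually, $\overline{\T}_{v'_0} \subseteq \overline{\T}_{(M,P)}$ translates into $\F_E \subseteq \F_{v'_0}$, so for $X \in \F_E$ both $\ip{v}{\dimu L}$ and $\ip{v'_0}{\dimu L}$ are negative on nonzero submodules $L$, which gives $\F_E \subseteq \F_{v + tv'_0}$, that is, $\overline{\T}_{v+tv'_0} \subseteq \overline{\T}_{(M,P)}$. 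Hence $v + tv'_0 \in N_E$ for all $t \geqslant 0$, with no smallness of $t$ required. Now $\W_v = \tps{}{M,P}$ forces $\W_{\pi(v)} = \mc B$, where $B = B_{(M,P)}$, and hence $\pi(v) = 0$, so $\pi(v + t v'_0) = t\,\pi(v'_0)$ traces the ray through $\pi(v'_0)$, which lies in the cone $\pi(E')$. Since $\pi$ sends TF-classes in $N_E$ bijectively to TF-classes of $B$, the unique TF-class in $N_E$ lying over $\pi(E')$ is $E'$ itself, so $v + tv'_0 \in E'$ for all $t > 0$. Letting $t \to 0^+$ shows $v \in \overline{E'}$, and as $v \in E$ was arbitrary we obtain $E \subseteq \overline{E'}$, i.e. $E \leqslant E'$.

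I expect this backward direction to be the main obstacle: one must produce points of $E'$ converging to a prescribed point of $E$, and a naive perturbation argument only controls $\T$ and $\overline{\T}$ up to errors that are difficult to make uniform over all modules. The device that removes the difficulty is that along the ray $v + tv'_0$ the two defining inequalities of $N_E$ hold additively, so membership in $N_E$ is automatic for every $t \geqslant 0$; the reduction bijection then upgrades the statement that the image lands in $\pi(E')$ to the statement that the point itself lies in $E'$.
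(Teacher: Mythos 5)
Your proof is correct, but the ``if'' direction takes a genuinely different route from the paper's. The ``only if'' direction is essentially the paper's semicontinuity argument; your only variation is to obtain $\T_{E} \subseteq \T_{E'}$ directly, using that a fixed module admits only finitely many dimension vectors of quotients so that the strict inequalities defining $\T_{v}$ persist for $v_{k}$ close to $v$, where the paper instead deduces it from $\overline{\F}_{E'} \subseteq \overline{\F}_{E}$ via the torsion pair $(\T_{v}, \overline{\F}_{v})$. For the ``if'' direction the paper argues algebraically: from $\T_{(M,P)} \subseteq \T_{E'} \subseteq \overline{\T}_{E'} \subseteq \overline{\T}_{(M,P)}$ and \cite[Proposition~2.9]{AIR14} it concludes that $(M,P)$ is a direct summand of both support $\tau$-tilting pairs corresponding to $\T_{E'}$ and $\overline{\T}_{E'}$, whose maximal common direct summand is $(M',P')$; hence $(M,P)$ is a direct summand of $(M',P')$ and $E \subseteq \overline{E'}$ follows from the definition of the cones. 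You instead argue geometrically with the reduction map of \cite{Asai2021}: the ray $v + t v_{0}'$ remains in $N_{E}$ because the defining inequalities are additive in $v$, it projects under $\pi_{(M,P)}$ onto the ray through $\pi(v_{0}')$ inside the TF-equivalence class $\pi(E')$ of the reduced algebra, and the reduction bijection forces $v + t v_{0}' \in E'$, whence $v \in \overline{E'}$ as $t \to 0^{+}$. The paper's route is shorter granted the cited result and yields the extra structural fact that $(M,P)$ is a direct summand of $(M',P')$, which is what really encodes the face relation between the two cones; your route stays entirely inside the wall-and-chamber picture, never uses that $E'$ is itself a $g$-vector cone (only that it is a TF-equivalence class contained in $N_{E}$, so it would apply beyond $TF_{A}$), but relies on the full strength of Asai's reduction theorem, in particular the precise statement that $\pi$ carries TF-equivalence classes in $N_{(M,P)}$ bijectively onto TF-equivalence classes of $B_{(M,P)}$. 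Both arguments are sound.
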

\begin{proof}
Let $E$ and $E'$ be TF-equivalence classes in $TF_{A}$ such that $E \leqslant E'$. 
By definition of $TF_{A}$, $E = \mathcal{C}_{(M, P)}$ and $E' = \mathcal{C}_{(M', P')}$ for some $\tau$-rigid pairs $(M, P)$ and $(M', P')$. We have that $\mathcal{C}_{(M, P)} \subseteq \overline{\mathcal{C}}_{(M', P')}$. Hence, by taking limits inside $E'$, we have that $\overline{\mathcal{T}}_{E'} \subseteq \overline{\mathcal{T}}_{E}$ and $\overline{\mathcal{F}}_{E'} \subseteq \overline{\mathcal{F}}_{E}$. 
Indeed, given $M \in \overline{\mathcal{T}}_{E'}$, we have that $\ip{v}{\dimu N} \geqslant 0$ for every quotient $N$ of $M$ and all $v \in E'$. Since any $w \in E$ is a limit of a sequence $\{v_{k}\}_{k \in \mathbb{N}} \subset E'$, we must have that $\ip{w}{\dimu N} \geqslant 0$ for every quotient $N$ of $M$ and all $w \in E$ as well. The argument for torsion-free classes is similar.
 The inclusion of torsion-free classes here implies that $\mathcal{T}_{E} \subseteq \mathcal{T}_{E'}$, and so we obtain that \[\mathcal{T}_{E} \subseteq \mathcal{T}_{E'} \subseteq \overline{\mathcal{T}}_{E'} \subseteq \overline{\mathcal{T}}_{E},\] which precisely gives us that $E' \subseteq N_{E}$.

To show the converse, suppose that $E' \subseteq N_{E}$.
Then, by definition, we have that 
\[\mathcal{T}_{E} \subseteq \mathcal{T}_{E'} \subseteq \overline{\mathcal{T}}_{E'} \subseteq \overline{\mathcal{T}}_{E}.\]
Moreover, there are $\tau$-rigid pairs $(M,P)$ and $(M',P')$ such that $E = \mathcal{C}_{(M,P)}$ and $E' = \mathcal{C}_{(M',P')}$.
It follows from \cite[Proposition~2.9]{AIR14} that $(M,P)$ is a direct summand of the $\tau$-tilting pairs $(T, Q)$ and $(\overline{T}, \overline{Q})$ corresponding to $\mathcal{T}_{E'}$ and $\overline{\mathcal{T}}_{E'} $, respectively. 
But it also follows from \cite[Proposition~2.9]{AIR14} that the maximal common direct summand of $(T, Q)$ and $(\overline{T}, \overline{Q})$ is precisely $(M',P')$.
Hence $(M,P)$ is a direct summand of $(M',P')$.
Then by construction we obtain that $\mathcal{C}_{(M,P)} \subset \overline{\mathcal{C}}_{(M',P')}$.
In other words, $E \leqslant E'$.
\end{proof}

Given a TF-equivalence class $E$, we write $\np{E}\colon \mathbb{R}^{n} \to \lspan{E}^{\perp}$ for the projection onto the orthogonal complement of the vector subspace $\lspan{E}$. We now define our category $\cat{A}$. 

\begin{definition}\label{def:cat}
We define the category $\cat{A}$ as follows.
\begin{enumerate}[label=(\Alph*),wide]
\item The objects of $\cat{A}$ are equivalence classes $[E]$ of objects of $TF_{A}$ under the equivalence relation where $E \sim E'$ if $\W_E = \W_{E'}$, recalling that these are the wide subcategories associated to the TF-equivalence classes in Subsection~\ref{sect:back:w&c}.
\item Given objects $[E]$ and $[F]$ of $\cat{A}$, we have that $\Hom_{\cat{A}}([E], [F])$ consists of equivalence classes of objects in \[\bigcup_{E' \in [E], F' \in [F]} \Hom_{TF_{A}}(E', F')\] under the equivalence relation where $f_{EF} \sim f_{E'F'}$ if and only if $\np{E}(F) = \np{E'}(F')$. 
Recall that the $\Hom$-set $\Hom_{TF_{A}}(E', F')$ equals $\{f_{E'F'}\}$ if $E' \leqslant F'$, and is empty otherwise.\label{op:cat:morph_equiv}
\item Given a morphism $[f_{EF}] \in \Hom_{\cat{A}}([E], [F])$ and a morphism $[f_{FG}] \in \Hom_{\cat{A}}([F], [G])$, the composition $[f_{FG}] \circ [f_{EF}]$ is defined to be $[f_{EG}]$.
\end{enumerate}
\end{definition}

\begin{remark}
The equivalence relations on objects and morphisms of $TF_{A}$ to form the category $\cat{A}$ coincide with the gluing rules used to construct the picture space \cite[Definition~3.2.1]{ITW16}.
\end{remark}

\begin{remark}\label{rmk:stablemorph}
Morphisms in the $\tau$-cluster morphism category are given by the so-called signed $\tau$-exceptional sequences introduced in \cite{BM21_tau}, see also \cite{MT20}. 
The construction of $\cat{A}$ in Definition~\ref{def:cat} is motivated by \cite[Proposition~6.5]{MST23} where it was shown, in the notation of Subsection~\ref{subsec:tcmcat}, that if $\mathcal{W}_1 = \mathcal{J}(M',P')$ and $g_{(M, P)}^{\mathcal{W}_{1}}\colon \mathcal{W}_{1} \to \mathcal{W}_{2}$ is a morphism in $\tcm{A}$, then $M$ and $P$ are $v$-semistable objects for every $v \in \mathcal{C}_{(M',P')}$.
\end{remark}

Note that it is not yet clear that composition is well-defined, for two reasons.
\begin{enumerate}
\item It is not clear how to compose morphisms $[f_{EF}]$ and $[f_{F'G}]$ where $F \sim F'$. In order to be able to do this, one would need to find $TF$-equivalence classes $E' \in [E]$, $F'' \in [F]$, $G'\in [G]$ and morphisms $f_{E'F''} \sim f_{EF}$ and $f_{F''G'} \sim f_{F'G}$, which would give the composition as $[f_{E'G'}]$.\label{prob:1}
\item It is not clear that composition respects the equivalence relation. For instance, given $f_{EF} \sim f_{E'F'}$ and $f_{FG} \sim f_{F'G'}$, it is not clear that $f_{EG} \sim f_{E'G'}$.\label{prob:2}
\end{enumerate}

In order to resolve these issues, we first show that equivalent TF-equivalence classes have the same linear span. This means that the projection maps onto their orthogonal complements are also the same. Hence, it makes sense to compare $\np{E}(F)$ and $\np{E'}(F')$ when $E \sim E'$. In order to show this, we show how the linear span of a TF-equivalence class may be described in terms of the associated wide subcategory.

\begin{lemma}\label{lem:tf_orth}
Let $E$ be a TF-equivalence class. Then \[\{\dimu X \st X \text{ a simple object in } \mathcal{W}_{E}\}\] is a basis of $\lspan{E}^{\perp}$.
\end{lemma}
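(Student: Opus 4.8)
The strategy is to establish three facts: (i) every vector $\dimu X$ with $X$ a simple object of $\W_E$ lies in $\lspan{E}^{\perp}$; (ii) these vectors are linearly independent; and (iii) their number equals $\dim \lspan{E}^{\perp}$. Together (i)--(iii) force the set to be a basis. Fact (i) is immediate from stability: since $\W_E = \overline{\T}_E \cap \overline{\F}_E$ is exactly the category of $v$-semistable modules for any $v \in E$, a simple object $X$ satisfies $\ip{v}{\dimu X} = 0$ for all $v \in E$, and as $E$ spans $\lspan{E}$ this yields $\dimu X \in \lspan{E}^{\perp}$.

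For (ii) and (iii) I use the cone description $E = \mathcal{C}_{(M,P)}$, so that $\W_E = \tps{}{M, P}$ has simple objects $X_1, \dots, X_m$ and I may invoke the reduction map $\pi = \pi_{(M,P)}$ of \eqref{eq:pi}. The key observation is that the $i$-th component of $\pi$ is the linear functional $v \mapsto \ip{v}{\dimu X_i}/d_i$, so that, up to the positive scalars $d_i$, the transpose of $\pi$ sends the $i$-th dual basis vector to $\dimu X_i$. By property~(1) of $\pi$, the restriction $\pi|_{N_{(M,P)}}$ is surjective, hence $\pi \colon \mathbb{R}^n \to \mathbb{R}^m$ is surjective; equivalently its transpose is injective, which is precisely the linear independence of $\{\dimu X_1, \dots, \dimu X_m\}$. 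This is the heart of the proof, and I expect the main point to be recognising that Asai's surjectivity result for $\pi$ translates directly into linear independence of the dimension vectors of the simple objects; the remaining steps are dimension bookkeeping.

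For (iii), I count dimensions. Since $\mathcal{C}_{(M,P)}$ is open in its linear span, $\lspan{E} = \lspan{g^{M_1}, \dots, g^{M_k}, -g^{P_{k+1}}, \dots, -g^{P_t}}$, and as the $g$-vectors of the indecomposable summands of a $\tau$-rigid pair are linearly independent \cite{DIJ19}, we obtain $\dim \lspan{E} = |M| + |P|$. On the other hand, $\tau$-tilting reduction gives that the number $m$ of simple objects of $\tps{}{M, P}$ equals the rank $n - |M| - |P|$ of $B_{(M, P)}$ \cite{Jasso15}. Hence $\dim \lspan{E}^{\perp} = n - \dim \lspan{E} = m$, so the $m$ linearly independent vectors $\dimu X_1, \dots, \dimu X_m$ of $\lspan{E}^{\perp}$ produced in step (ii) necessarily form a basis, completing the proof.
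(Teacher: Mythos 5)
Your proof is correct, and its overall skeleton matches the paper's: identify $E = \mathcal{C}_{(M,P)}$, compute $\dim \lspan{E} = |M|+|P|$ from the linear independence of the $g$-vectors of the indecomposable summands of $(M,P)$, and use Jasso's reduction to count the simple objects of $\tps{}{M,P}$ as $m = n - |M| - |P|$. Where you genuinely diverge is in the key step of establishing linear independence of $\{\dimu X_1,\dots,\dimu X_m\}$: you deduce it from Asai's surjectivity of $\pi|_{N_{(M,P)}}$ by observing that surjectivity of the linear map $\pi\colon\mathbb{R}^n\to\mathbb{R}^m$ is equivalent to injectivity of its adjoint, which sends $e_i$ to $\dimu X_i/d_i$. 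The paper instead invokes the isomorphism of Grothendieck groups $K_0(\tps{}{M,P}) \cong K_0(\mc B_{(M,P)}) \cong \mathbb{Z}^{m}$ induced by the equivalence $\phi$, with the simple objects as a basis, and then intersects with $\lspan{E}^{\perp}$. Your route has the advantage of being completely explicit about why the dimension vectors are independent \emph{inside} $K_0(A)$ (the paper's argument leaves the injectivity of the natural map $K_0(\W_E)\to K_0(A)$ somewhat implicit in its identification of $K_0(\tps{}{M,P})$ with a subgroup of $\mathbb{Z}^n$), at the cost of importing a slightly heavier input, namely the surjectivity statement from \cite[Lemma~4.4]{Asai2021}; the paper's route is shorter and stays entirely within the Grothendieck-group formalism. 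Both are valid; only the citation for the linear independence of $g$-vectors should be \cite[Theorem~5.1]{AIR14} rather than \cite{DIJ19}, though the latter also records the relevant fan structure.
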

\begin{proof}
We use the fact that $E = \mathcal{C}_{(M, P)}$ for a $\tau$-rigid pair $(M, P)$. 
We then have that $\lspan{E}$ is the span of the $g$-vectors of the indecomposable summands of $(M, P)$. These $g$-vectors are linearly independent by \cite[Theorem~5.1]{AIR14}. Hence $\dim \lspan{E} = |M| + |P|$, and so $\dim \lspan{E}^{\perp} = n - |M| - |P|$.

We then note that $\tps{}{M, P}$ is equivalent to $\mc B_{(M, P)}$, the category of modules over the $\tau$-tilting reduction algebra. 
This moreover induces an isomorphism of Grothendieck groups $K_{0}(\tps{}{M, P}) \cong K_{0}(\mc B_{(M, P)})$. 
We then have that $K_{0}(\mc B_{(M, P)}) \cong \mathbb{Z}^{n - |M| - |P|}$ with a basis given by the dimension vectors of the simple modules, and so $K_{0}(\tps{}{M, P}) = K_{0}(\mathcal{W}_{E}) \cong \mathbb{Z}^{n - |M| - |P|}$ with a basis given by the dimension vectors of the simple objects.
The result then follows from the fact that $K_{0}(\tps{}{M, P})  \subseteq \lspan{E}^{\perp}$, by definition of $\mathcal{W}_{E}$.
\end{proof}

\begin{corollary}\label{cor:tf_linear_eq}
Let $E$ and $E'$ be TF-equivalence classes such that $[E] = [E']$. Then
\begin{enumerate}
\item $\lspan{E}^{\perp} = \lspan{\dimu M \st M \in \mathcal{W}_{E}}$;\label{op:tf_linear_eq:orth_desc}
\item $\lspan{E}^{\perp} = \lspan{E'}^{\perp}$;\label{op:tf_linear_eq:orth}
\item $\lspan{E} = \lspan{E'}$;\label{op:tf_linear_eq:span}
\item $\np{E} = \np{E'}$.\label{op:tf_linear_eq:map}
\end{enumerate}
\end{corollary}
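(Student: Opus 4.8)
The plan is to deduce all four statements from Lemma~\ref{lem:tf_orth}, which identifies $\lspan{E}^{\perp}$ with the span of the dimension vectors of the simple objects of $\W_E$. Only \eqref{op:tf_linear_eq:orth_desc} requires a genuine argument; the remaining three statements are formal consequences, and the hypothesis $[E] = [E']$ enters only through the fact that it means precisely $\W_E = \W_{E'}$ (Definition~\ref{def:cat}).

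For \eqref{op:tf_linear_eq:orth_desc} I would prove both inclusions. The simple objects of $\W_E$ are in particular objects of $\W_E$, so Lemma~\ref{lem:tf_orth} gives the inclusion $\lspan{E}^{\perp} \subseteq \lspan{\dimu M \st M \in \W_E}$ at once. For the reverse inclusion I would use that $\W_E$, being wide, is an abelian category in which every object $M$ has a finite composition series with factors among the simple objects of $\W_E$. Since $\W_E$ is closed under extensions in $\mc A$, the class $[M] \in K_0(A)$ is the sum of the classes of these composition factors, whence $\dimu M = \Phi([M])$ is a non-negative integer combination of the dimension vectors of the simple objects of $\W_E$. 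By Lemma~\ref{lem:tf_orth} this combination lies in $\lspan{E}^{\perp}$, giving $\lspan{\dimu M \st M \in \W_E} \subseteq \lspan{E}^{\perp}$.

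Statement \eqref{op:tf_linear_eq:orth} is then immediate: since $[E] = [E']$ forces $\W_E = \W_{E'}$, the right-hand side of \eqref{op:tf_linear_eq:orth_desc} is literally identical for $E$ and $E'$, so $\lspan{E}^{\perp} = \lspan{E'}^{\perp}$. Statement \eqref{op:tf_linear_eq:span} follows by taking orthogonal complements in $\mathbb{R}^n$, using $(\lspan{E}^{\perp})^{\perp} = \lspan{E}$. Finally, \eqref{op:tf_linear_eq:map} holds because $\np{E}$ is by definition the orthogonal projection onto $\lspan{E}^{\perp}$; by \eqref{op:tf_linear_eq:orth} this target subspace is the same for $E$ and $E'$, and orthogonal projection onto a fixed subspace is a uniquely determined linear map, so $\np{E} = \np{E'}$.

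I expect the only real obstacle to be the reverse inclusion in \eqref{op:tf_linear_eq:orth_desc}, specifically the passage from composition factors in the abelian category $\W_E$ to classes in $K_0(A)$. This is exactly where closure of $\W_E$ under extensions is needed, so that the class of $M$ in $K_0(A)$ decomposes additively along a composition series computed inside $\W_E$.
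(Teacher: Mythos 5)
Your proposal is correct and follows the paper's overall strategy: everything is deduced from Lemma~\ref{lem:tf_orth}, with \eqref{op:tf_linear_eq:orth}--\eqref{op:tf_linear_eq:map} being the same formal consequences via $\W_E = \W_{E'}$, double orthogonal complements, and uniqueness of orthogonal projection. The one place you diverge is the inclusion $\lspan{\dimu M \st M \in \W_E} \subseteq \lspan{E}^{\perp}$ in \eqref{op:tf_linear_eq:orth_desc}: you route this through composition series in the abelian category $\W_E$ and additivity of classes in $K_0(A)$ along short exact sequences (which is legitimate, since kernels, cokernels and extensions in $\W_E$ agree with those in $\mc A$), whereas the paper gets it in one line directly from the definition $\W_E = \overline{\T}_E \cap \overline{\F}_E$: every $M \in \W_E$ is $v$-semistable for every $v \in E$, so $\ip{v}{\dimu M} = 0$ and $\dimu M \in \lspan{E}^{\perp}$ immediately. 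Your argument is sound but does more work than necessary; the paper's version avoids any appeal to finite length or to the exact structure of the wide subcategory.
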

\begin{proof}
Claim~\eqref{op:tf_linear_eq:orth_desc} follows from Lemma~\ref{lem:tf_orth}. Indeed, it is obvious that \[\lspan{\dimu X \st X \text{ a simple object in } \mathcal{W}_{E}} \subseteq \lspan{\dimu M \st M \in \mathcal{W}_{E}},\] 
whilst the definition of $\mathcal{W}_{E}$ gives us that \[\lspan{E}^{\perp} \supseteq \lspan{\dimu M \st M \in \mathcal{W}_{E}}.\]
Statement \eqref{op:tf_linear_eq:orth} then follows from \eqref{op:tf_linear_eq:orth_desc}, since if $[E] = [E']$, then $\mathcal{W}_{E} = \mathcal{W}_{E'}$. Statements~\eqref{op:tf_linear_eq:span} and \eqref{op:tf_linear_eq:map} are then easy consequences.
\end{proof}

We show that using the orthogonal projection $\nu$ is equivalent to using the map $\pi$ from Subsection~\ref{sect:red_wcs}.

\begin{lemma}\label{lem:proj=red}
Let $E$ and $E'$ be TF-equivalence classes such that $E \sim E'$ with $E \leqslant F$ and $E' \leqslant F'$ for some TF-equivalence classes $F$ and $F'$. Then $\np{E}(F) = \np{E'}(F')$ if and only if $\pi_{E}(F) = \pi_{E'}(F')$.
\end{lemma}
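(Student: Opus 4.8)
The plan is to show that, because $E \sim E'$, the two orthogonal projections coincide and the two reduction maps coincide, and then to factor the common reduction map through the common projection via a linear isomorphism. Concretely, first I would use that $E \sim E'$ means $\W_E = \W_{E'}$, so Corollary~\ref{cor:tf_linear_eq}\eqref{op:tf_linear_eq:map} gives $\np{E} = \np{E'}$; write $\nu$ for this common orthogonal projection onto $\lspan{E}^{\perp}$. Next, I would observe that the map in \eqref{eq:pi} depends only on the simple objects $X_1, \dots, X_m$ of $\tps{}{M, P} = \W_E$ and on the integers $d_i = \dim_K \End_A(X_i)$. Since $\W_E = \W_{E'}$, exactly the same data defines $\pi_{E'}$, so $\pi_E = \pi_{E'} =: \pi$. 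This reduces the claim to the statement that $\nu(F) = \nu(F')$ if and only if $\pi(F) = \pi(F')$.

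The key step is to establish a factorization $\pi = \psi \circ \nu$ for a linear isomorphism $\psi\colon \lspan{E}^{\perp} \to \mathbb{R}^m$. For any $v \in \mathbb{R}^n$, each $\dimu X_i$ lies in $\lspan{E}^{\perp}$ by Lemma~\ref{lem:tf_orth}, while $v - \nu(v) \in \lspan{E}$ by the definition of the orthogonal projection; hence $\ip{v}{\dimu X_i} = \ip{\nu(v)}{\dimu X_i}$ and therefore $\pi(v)_i = \ip{\nu(v)}{\dimu X_i}/d_i$. Thus $\pi = \psi \circ \nu$, where $\psi(w)_i := \ip{w}{\dimu X_i}/d_i$. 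The geometric content here is that $\pi$ sees $v$ only through its inner products with the vectors $\dimu X_i$ of $\lspan{E}^{\perp}$, which is precisely the information retained by $\nu$.

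It then remains to check that $\psi$ is an isomorphism. By Lemma~\ref{lem:tf_orth}, the vectors $\{\dimu X_i\}$ form a basis of $\lspan{E}^{\perp}$, so $\dim \lspan{E}^{\perp} = m$; and $\psi$ is injective, since $\psi(w) = 0$ forces $w$ to be orthogonal to every $\dimu X_i$, hence to all of $\lspan{E}^{\perp}$, so that $w = 0$ as $w \in \lspan{E}^{\perp}$. A dimension count then shows $\psi$ is bijective. Since $\psi$ is a bijection, for the subsets $F, F' \subseteq \mathbb{R}^n$ we have $\nu(F) = \nu(F')$ if and only if $\psi(\nu(F)) = \psi(\nu(F'))$, that is, if and only if $\pi(F) = \pi(F')$; unwinding $\nu = \np{E} = \np{E'}$ and $\pi = \pi_E = \pi_{E'}$ yields the lemma.

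The main obstacle is not so much any single computation as correctly isolating the two facts that make the argument work: that the defining data of $\pi_E$ and of $\pi_{E'}$ genuinely agree once $\W_E = \W_{E'}$ (so one may speak of a single map $\pi$), and that Lemma~\ref{lem:tf_orth} supplies both the containment $\dimu X_i \in \lspan{E}^{\perp}$ used in the factorization and the basis property used to prove $\psi$ invertible. Everything else is routine linear algebra.
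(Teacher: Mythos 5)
Your proof is correct and establishes the same underlying fact as the paper---that $\pi_{E}$ factors through $\np{E}$ via a linear isomorphism of $\lspan{E}^{\perp}$ with $\mathbb{R}^{m}$ depending only on $\W_{E}$---but by a noticeably leaner route. The paper builds this isomorphism extrinsically: it takes the Bongartz complement $T_{1}\oplus\dots\oplus T_{m}$ of the pair $(M,P)$ with $E=\mathcal{C}_{(M,P)}$, invokes \cite[Theorem~5.1]{AIR14} to obtain a basis of $g$-vectors of $\mathbb{R}^{n}$ together with the pairing $\ip{g_{i}}{\dimu X_{j}}=d_{j}\delta_{ij}$ from \cite{Asai2021,Treffinger2019}, defines $\rho_{E}$ by $\np{E}(g_{i})\mapsto e_{i}$, and then needs a separate uniqueness argument to see that this basis of $\lspan{E}^{\perp}$, and hence $\rho_{E}$, depends only on $\W_{E}=\W_{E'}$. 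Your map $\psi(w)_{i}=\ip{w}{\dimu X_{i}}/d_{i}$ is in fact the same isomorphism (one checks $\psi(\np{E}(g_{i}))=e_{i}$ using that pairing), but because you define it intrinsically from the simple objects of $\W_{E}$, its independence of the chosen representative is automatic, and the only inputs are Lemma~\ref{lem:tf_orth} (both for $\dimu X_{i}\in\lspan{E}^{\perp}$, which gives the factorisation $\pi=\psi\circ\nu$, and for the dimension count making $\psi$ bijective) and Corollary~\ref{cor:tf_linear_eq}. What your version buys is the elimination of the Bongartz complement and the dual-basis computation; what the paper's version buys is an explicit description of $\np{E}$ in the $g$-vector basis, though it is not reused elsewhere. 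The only point worth an extra sentence is that the identification $\pi_{E}=\pi_{E'}$ presupposes a common ordering of the simple objects of $\W_{E}=\W_{E'}$, since $\pi_{(M,P)}$ is a priori indexed by the $\tau$-rigid pair rather than by the wide subcategory; this is harmless and implicit in the paper's proof as well.
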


\begin{proof}
First let $\{X_{1}, X_{2}, \dots, X_{m}\}$ be the set of simple objects of $\mathcal{W}_{E} = \mathcal{W}_{E'}$ with $d_{i} = \dim \End_{A}X_{i}$. Then let $(M, P)$ be the $\tau$-rigid pair with $E = \mathcal{C}_{(M, P)}$. Furthermore let $T = T_{1} \oplus \dots \oplus T_{m}$ be the Bongartz complement of $(M, P)$.
We denote the $g$-vectors of $T_{1}, T_{2}, \dots, T_{m}$ by $g_{1}, g_{2}, \dots, g_{m}$, and the $g$-vectors of the indecomposable direct summands of $(M, P)$ by $g_{m + 1}, g_{m + 2}, \dots, g_{n}$. By \cite[Theorem~5.1]{AIR14}, $\{g_{1}, g_{2}, \dots, g_{n}\}$ forms a basis of $\mathbb{R}^n$.

We will describe $\np{E}$ using this basis, and then use this to compare $\np{E}$ to $\pi_{E}$. Note first that $\ip{g_{i}}{\dimu X_{j}} = 0$ for any $m + 1 \leqslant i \leqslant n$, since $g_{i} \in \lspan{E}$ and $\dimu X_{j} \in \lspan{E}^{\perp}$. 
Moreover, $\ip{g_{i}}{\dimu X_{j}} = d_{j}\delta_{ij}$ for $1 \leqslant i \leqslant m$ by, for instance, \cite[Proof of Lemma~4.4(2)]{Asai2021}, see also \cite[Lemma~3.3]{Treffinger2019}.
Hence, we have that \[\nu(g_{i}) = \sum_{j = 1}^m \frac{\ip{g_{i}}{\dimu X_{j}}}{d_{j}}\np{E}(g_{j})\] for all $i$. 
This implies that \[\nu(v) = \sum_{j = 1}^m \frac{\ip{v}{\dimu X_{j}}}{d_{j}}\np{E}(v)\] for all $v \in \mathbb{R}^n$, as $\{g_{1}, g_{2}, \dots, g_{m}\}$ is a basis. 
Moreover, since $\np{E}(g_{i}) = 0$ for $m + 1 \leqslant i \leqslant n$, we have that $\lspan{E}^\perp$ must have basis $\{\np{E}(g_{1}), \np{E}(g_{2}), \dots, \np{E}(g_{m})\}$, as the image of $\np{E}$ must be the whole of $\lspan{E}^\perp$, which has dimension $m$. 
Hence, let $\rho_{E} \colon \lspan{E}^\perp \to \mathbb{R}^m$ be the isomorphism of vector spaces sending $\np{E}(g_{i}) \mapsto e_{i}$.

Note that $\{\np{E}(g_1), \dots, \np{E}(g_m)\}$ is the unique basis of $\lspan{E}^\perp$ such that $\ip{\np{E}(g_i)}{\dimu X_j}= d_j\delta_{ij}$. 
Then this basis depends only on $\mathcal{W}_E = \mathcal{W}_{E'}$. 
It is then clear from the definition of $\pi_{E}$ from Subsection~\ref{sect:red_wcs} that $\pi_{E} = \rho_{E}\np{E}$. 
Then because $\rho_{E}$ only depends upon $\mathcal{W}_{E} = \mathcal{W}_{E'}$ and $\lspan{E} = \lspan{E'}$, we also have that $\pi_{E'} = \rho_{E}\np{E'}$. Since $\rho_{E}$ is an isomorphism, it follows that $\np{E}(F) = \np{E'}(F')$ if and only if $\pi_{E}(F) = \pi_{E'}(F')$.
\end{proof}

We now show that our category $\cat{A}$ is in fact a well-defined category. We first solve problem~\eqref{prob:1}.

\begin{lemma}
Given morphisms $[f_{EF}]$ and $[f_{F'G'}]$ where $F \sim F'$, there exists a morphism $f_{FG} \sim f_{F'G'}$ with $G \sim G'$.
\end{lemma}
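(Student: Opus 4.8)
The plan is to construct the required class $G$ by transporting $G'$ through the reduction map $\pi$ and exploiting that $\pi$ depends only on the wide subcategory $\mathcal{W}_F = \mathcal{W}_{F'}$. Write $F = \mathcal{C}_{(M, P)}$ and $F' = \mathcal{C}_{(M', P')}$, so that $\mathcal{W}_F = \mathcal{W}_{F'} =: \mathcal{W}$ since $F \sim F'$, and let $X_1, \dots, X_m$ be the simple objects of $\mathcal{W}$. Because the formula~\eqref{eq:pi} depends only on the $X_i$ and the integers $d_i = \dim_K \End_A(X_i)$, which are determined by $\mathcal{W}$ alone, the maps $\pi_F$ and $\pi_{F'}$ coincide as maps $\mathbb{R}^n \to \mathbb{R}^m$; write $\pi$ for this common map. (This also follows from Lemma~\ref{lem:proj=red} together with Corollary~\ref{cor:tf_linear_eq}\eqref{op:tf_linear_eq:map}.) Let $B_F := B_{(M, P)}$ and $B_{F'} := B_{(M', P')}$ be the two reduction algebras, with equivalences $\phi_F, \phi_{F'}$ of~\eqref{phi}.

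First I would record that the wall-and-chamber structures in $\mathbb{R}^m$ associated to $B_F$ and $B_{F'}$ coincide. Both $\mc B_F$ and $\mc B_{F'}$ are equivalent to $\mathcal{W}$ via $\phi_F, \phi_{F'}$, and under the induced identifications $K_0(\mc B_F) \cong K_0(\mathcal{W}) \cong K_0(\mc B_{F'}) \cong \mathbb{Z}^m$ the simple modules correspond in both cases to the same simple objects $X_1, \dots, X_m$. Since $\phi_F$ and $\phi_{F'}$ are exact equivalences, the condition for an object to be $w$-semistable in $\mc B_F$ unwinds to a condition on the quotients of the corresponding object of $\mathcal{W}$, expressed entirely through $K_0(\mathcal{W})$, and likewise for $\mc B_{F'}$; these two conditions are literally the same. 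Hence $\mathcal{W}_w^{\mc B_F} = \mathcal{W}_w^{\mc B_{F'}}$ for every $w \in \mathbb{R}^m$, and in particular $B_F$ and $B_{F'}$ have the same TF-equivalence classes in $\mathbb{R}^m$.

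Next I would build $G$. As $f_{F'G'}$ is a morphism in $TF_A$, we have $F' \leqslant G'$, so $G' \subseteq N_{F'}$ by Lemma~\ref{lem:order=N}, and by property~(4) applied to the reduction at $F'$ the set $\pi(G') = \pi_{F'}(G')$ is a TF-equivalence class of $B_{F'}$; it is of cone type because $G'$ is, since cone-type classes correspond to $\tau$-rigid pairs under reduction (Subsection~\ref{sect:tau_wcs}). By the previous paragraph it is equally a cone-type TF-equivalence class of $B_F$, so applying property~(4) at $F$ yields a unique cone-type class $G \in TF_A$ with $G \subseteq N_F$ and $\pi(G) = \pi(G')$. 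Then $G \subseteq N_F$ gives $F \leqslant G$ by Lemma~\ref{lem:order=N}, so $f_{FG}$ exists, and $\pi_F(G) = \pi_{F'}(G')$ gives $\np{F}(G) = \np{F'}(G')$ by Lemma~\ref{lem:proj=red}; that is, $f_{FG} \sim f_{F'G'}$.

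It remains to prove $G \sim G'$, i.e.\ $\mathcal{W}_G = \mathcal{W}_{G'}$, and this is the step I expect to be the main obstacle, since $B_F$ and $B_{F'}$ are a priori distinct algebras. Choose $w \in \pi(G) = \pi(G')$ together with $v \in G$ and $v' \in G'$ with $\pi(v) = \pi(v') = w$; these exist as $\pi(G) = \pi(G')$, and $v \in N_F$, $v' \in N_{F'}$. By property~(2) applied at $F$ and at $F'$, an object $L \in \mathcal{W}$ lies in $\mathcal{W}_G = \mathcal{W}_v$ if and only if $\phi_F(L) \in \mathcal{W}_w^{\mc B_F}$, and in $\mathcal{W}_{G'} = \mathcal{W}_{v'}$ if and only if $\phi_{F'}(L) \in \mathcal{W}_w^{\mc B_{F'}}$. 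Both conditions unwind, through the exact equivalences, to the single requirement that $\ip{w}{[L]} = 0$ and $\ip{w}{[L'']} \geqslant 0$ for every quotient $L''$ of $L$ in $\mathcal{W}$, where $[\,\cdot\,]$ denotes the class in $K_0(\mathcal{W}) \cong \mathbb{Z}^m$; this is exactly the coincidence established above. Therefore $\mathcal{W}_G = \mathcal{W}_{G'}$, so $G \sim G'$, completing the construction.
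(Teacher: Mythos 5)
Your proof is correct and follows essentially the same route as the paper's: both transport $G'$ through the reduction/projection maps, using that $\pi_F=\pi_{F'}$ depends only on $\mathcal{W}_F=\mathcal{W}_{F'}$ together with the bijection of TF-equivalence classes under $\pi$, to produce $G\subseteq N_F$ with $\np{F}(G)=\np{F'}(G')$ and then invoke Lemma~\ref{lem:order=N}. The only difference is one of completeness: the paper's proof stops once $f_{FG}\sim f_{F'G'}$ is obtained, whereas you additionally verify $G\sim G'$ by checking that the semistability conditions over $B_F$ and $B_{F'}$ unwind to the same condition on $K_0(\mathcal{W})$ --- a step the paper leaves implicit and which is worth having spelled out.
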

\begin{proof}
Since $F \sim F'$, we know that the projection of the fan $N_{F}$ under $\np{F}$ must be equal to the projection of the fan $N_{F'}$ under $\np{F'}$ by Lemma~\ref{lem:proj=red} and the properties of $\pi$ described in Subsection~\ref{sect:red_wcs}.
 Hence, we must have that $\np{F'}(G')$ must be equal to $\np{F}(G)$ for some cone $G$ in $N_{F}$.  Since then $F \leqslant G$ by Lemma~\ref{lem:order=N}, this then gives the morphism $f_{FG}$ such that $f_{FG} \sim f_{F'G'}$.
\end{proof}

Now we solve problem~\eqref{prob:2}.

\begin{lemma}
Let $[E]$, $[F]$, and $[G]$ be objects of $\cat{A}$ with morphisms $[f_{EF}]$ and $[f_{FG}]$. Suppose that we further have $E' \in [E]$, $F' \in [F]$, and $G' \in [G]$, and that there are morphisms $f_{E'F'} \in [f_{EF}]$ and $f_{F'G'} \in [f_{FG}]$. Then $[f_{EG}] = [f_{E'G'}]$.
\end{lemma}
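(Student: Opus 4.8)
The plan is to translate both hypotheses into the reduction of $A$ by $E$, and then to reduce one further step so that the desired equality becomes an injectivity statement for $\tau$-tilting reduction. First I would normalise the maps using Corollary~\ref{cor:tf_linear_eq}: since $[E]=[E']$ and $[F]=[F']$, parts \eqref{op:tf_linear_eq:span} and \eqref{op:tf_linear_eq:map} give $\np{E}=\np{E'}$ and $\np{F}=\np{F'}$. Writing $\nu:=\np{E}=\np{E'}$ for the orthogonal projection onto $V:=\lspan{E}^{\perp}=\lspan{E'}^{\perp}$, and noting that $E\leqslant F$ forces $\lspan{E}\subseteq\lspan{F}$ and hence $V_{F}:=\lspan{F}^{\perp}\subseteq V$, the hypotheses $f_{E'F'}\in[f_{EF}]$ and $f_{F'G'}\in[f_{FG}]$ read $\nu(F)=\nu(F')$ and $\np{F}(G)=\np{F}(G')$, while the goal $[f_{EG}]=[f_{E'G'}]$ is exactly $\nu(G)=\nu(G')$.

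Next I would pass to the reduced algebra. By Lemma~\ref{lem:proj=red} together with the bijection between TF-equivalence classes in $N_{E}$ and those of the reduction algebra (the fourth property of $\pi$ recalled in Subsection~\ref{sect:red_wcs}, from \cite[Theorem~4.5]{Asai2021}), the restriction of $\nu$ to $N_{E}$, and likewise to $N_{E'}$, is a bijection onto the TF-equivalence classes of the algebra $B$ whose module category is $\W_{E}=\W_{E'}$, realised inside $V$. Set $\bar{F}:=\nu(F)=\nu(F')$, which by the first hypothesis is a single well-defined TF-class of $B$, and set $\bar{G}:=\nu(G)$ and $\bar{G}':=\nu(G')$. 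Because $\nu$ is continuous and linear it preserves the order (cf.\ Lemma~\ref{lem:order=N}), so $\bar{F}\leqslant\bar{G}$ and $\bar{F}\leqslant\bar{G}'$; the goal is now precisely $\bar{G}=\bar{G}'$.

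To finish I would reduce $B$ one more step by $\bar{F}$. Since $V_{F}\subseteq V$, nested orthogonal projections give $\np{F}=\np{F}\circ\nu$, so the second hypothesis becomes $\np{F}(\bar{G})=\np{F}(\bar{G}')$. The hard part is to recognise $\np{F}|_{V}$ as the reduction of $B$ by $\bar{F}$: a direct computation shows its kernel is $\lspan{F}\cap V=\nu(\lspan{F})=\lspan{\bar{F}}$, the span of $\bar{F}$ inside $V$, so $\np{F}|_{V}$ has the same kernel as the reduction map $\pi_{\bar{F}}$ of $B$, and hence (via the analogue of Lemma~\ref{lem:proj=red} for $B$) induces the same partition of the TF-classes of $B$. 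Applying the bijection of \cite[Theorem~4.5]{Asai2021} now to $B$, this reduction is injective on the TF-classes contained in $N_{\bar{F}}$; since $\bar{G},\bar{G}'\in N_{\bar{F}}$ by Lemma~\ref{lem:order=N} and they have equal image, I conclude $\bar{G}=\bar{G}'$, that is $\nu(G)=\nu(G')$. The main obstacle is exactly this last identification, which amounts to the transitivity of $\tau$-tilting reduction; alternatively one could invoke that transitivity directly from the iterated-reduction results behind Subsection~\ref{sect:red_wcs} and skip the kernel computation.
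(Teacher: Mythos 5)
Your argument is correct, but it takes a genuinely different route from the paper. The paper's proof stays entirely inside $\mathbb{R}^{n}$: it picks $w \in F$ and $w' \in F'$ with $\np{E}(w) = \np{E'}(w')$, uses the openness of the cones and the fact that the generators of $G$ extend those of $F$ to perturb these points into $G$ and $G'$ along a common direction $v \in \np{F}(G) = \np{F'}(G')$, deduces that $\np{E}(G) \cap \np{E'}(G') \neq \emptyset$, and then concludes with a single application of the ``disjoint or equal'' property of images of TF-classes under $\np{E}$ coming from Lemma~\ref{lem:proj=red} and \cite[Theorem~4.5]{Asai2021}. You instead perform the reduction twice: first by $E$ to land in the wall-and-chamber structure of $B$ with $\W_{E} = \mc B$, and then by $\bar{F} = \nu(F)$, using Asai's bijection at each stage. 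The extra work your route requires is exactly the identification of $\np{F}|_{V}$ with the second reduction map $\pi_{\bar{F}}$, which you correctly reduce to the kernel computation $\ker(\np{F}|_{V}) = \lspan{F} \cap V = \lspan{\bar{F}}$ (and the observation that linear maps with equal kernels identify the same pairs of sets); this is in effect the transitivity of $\tau$-tilting reduction at the level of the projections, which the paper deliberately avoids. What each approach buys: the paper's is shorter and needs only one invocation of Asai's bijection, at the cost of a somewhat informal geometric perturbation step; yours is more structural, makes the role of iterated reduction explicit, and replaces the perturbation argument by linear algebra, at the cost of the identification above. Both ultimately rest on the same injectivity statement from \cite[Theorem~4.5]{Asai2021}, so I consider your proposal a valid alternative proof.
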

\begin{proof}
We must show that $f_{E'G'} \sim f_{EG}$, that is, $\np{E'}(G') = \np{E}(G)$. Since $\np{E}(F) = \np{E'}(F')$, we may choose $w \in F$ and $w' \in F'$ such that $\np{E}(w) = \np{E'}(w')$. Then, let $v \in \np{F}(G) = \np{F'}(G')$. 

The generating vectors of $G$ consist of those of $F$ along with other vectors which have components in $\lspan{F}$ and its orthogonal complement. 
Hence, since $v \in \np{F}(G)$ and $w \in F$, there exists $\epsilon > 0$ such that $w + \epsilon v \in G$. 
Indeed, the vectors in $\np{F}(G)$ are those which are orthogonal to $F$ and point into $G$ from any point in $F$, recalling that all these cones are open.
Likewise, there exists $\epsilon' > 0$ such that $w' + \epsilon'v \in G'$. If we take $\delta = \min\{\epsilon, \epsilon'\}$, then we have both $w + \delta v \in G$ and $w' + \delta v \in G'$. We then obtain that
\begin{align*}
\np{E}(w + \delta v) &= \np{E}(w) + \delta\np{E}(v) \\
&= \np{E'}(w') + \delta\np{E'}(v) \\
&= \np{E'}(w' + \delta v).
\end{align*}
Thus $\np{E}(G) \cap \np{E'}(G') \neq \varnothing$. 
The images of cones under $\np{E}$ and $\np{E'}$ are either disjoint or equal by Lemma~\ref{lem:proj=red} and Subsection~\ref{sect:red_wcs}.
Hence, we conclude that we must have $\np{E}(G) = \np{E'}(G')$. This implies that $f_{E'G'} \sim f_{EG}$, as desired.
\end{proof}

As a consequence we have the following.

\begin{theorem}\label{thm:cat}
The set of equivalence classes $[E]$ of objects of $TF_{A}$ together with the morphisms defined as in Definition~3.3 gives rise to a well-defined category~$\cat{A}$.
\end{theorem}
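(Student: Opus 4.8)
The plan is to view $\cat{A}$ as a quotient of the poset category $TF_{A}$, whose objects and morphisms are the $\sim$-equivalence classes of those of $TF_{A}$, so that checking $\cat{A}$ is a category reduces to checking that the categorical axioms descend to these classes. The objects and the Hom-sets are already unambiguous from Definition~\ref{def:cat}, so I would spend no effort there; the entire substance is in the composition law, and the two immediately preceding lemmas were set up precisely to settle the two difficulties recorded in~\eqref{prob:1} and~\eqref{prob:2}. Thus the backbone of the argument is: (i) composition can always be performed; (ii) composition does not depend on the chosen representatives; (iii) the resulting operation is associative and admits identities.

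For (i) and (ii)---which I expect to be the main obstacle, and which are the only genuinely nontrivial points---I would simply invoke the two preceding lemmas. The first lemma shows that, given $[f_{EF}]$ and $[f_{F'G'}]$ with $F \sim F'$, one can always replace $f_{F'G'}$ by an equivalent morphism $f_{FG}$ sharing the middle object with $f_{EF}$, with $G \sim G'$, so that the composite may be defined as $[f_{EG}]$. The second lemma shows that this class is independent of all choices, namely that $f_{EF} \sim f_{E'F'}$ and $f_{FG} \sim f_{F'G'}$ force $f_{EG} \sim f_{E'G'}$. Together these establish that $[f_{FG}] \circ [f_{EF}] := [f_{EG}]$ is a well-defined partial binary operation on the morphisms of $\cat{A}$.

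Granting (i) and (ii), step (iii) is formal. Associativity holds because composition merely records endpoints, inherited from the order on $TF_{A}$: for composable $[f_{EF}], [f_{FG}], [f_{GH}]$ both bracketings collapse to $[f_{EH}]$, using (i) to produce a compatible chain of representatives along $E \leqslant F \leqslant G \leqslant H$ and (ii) to guarantee independence of that choice. The identity on $[E]$ is $[f_{EE}]$, the class of the identity morphism of the poset category $TF_{A}$, which exists by reflexivity of $\leqslant$; the identity axioms $[f_{EF}] \circ [f_{EE}] = [f_{EF}] = [f_{FF}] \circ [f_{EF}]$ then read off directly from the definition of composition. Assembling these verifications yields the theorem, the real content having been discharged by the two lemmas.
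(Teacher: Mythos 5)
Your proposal is correct and matches the paper's argument: the paper states the theorem simply as a consequence of the two preceding lemmas (which resolve exactly the two well-definedness issues you cite), leaving the associativity and identity axioms as the same formal endpoint-bookkeeping you spell out. No gap.
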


\begin{example} \label{example1}
Let $Q$ be the quiver \[\begin{tikzcd}
1 \arrow[r, "\alpha", bend left] & 2 \arrow[l, "\beta", bend left]
\end{tikzcd}\] and let $A = KQ/ \langle \beta \alpha \rangle$.
The Auslander--Reiten quiver of $A$ can be found in Figure~\ref{fig:ARquiver}, its wall-and-chamber structure in Figure~\ref{fig:w&c} and its $g$-vector fan in Figure~\ref{fig:g-fan}.
\begin{figure}
\[
\begin{tikzpicture}[scale=1.7]

\node(2l) at (0,0) {\fbox{$\rep{2}$}};
\node(1) at (2,0) {$\rep{1}$};
\node(2r) at (4,0) {\fbox{$\rep{2}$}};
\node(12) at (1,1) {$\rep{1\\2}$};
\node(21) at (3,1) {$\rep{2\\1}$};
\node(212) at (2,2) {$\rep{2\\1\\2}$};

\draw[->] (2l) -- (12);
\draw[->] (12) -- (212);
\draw[->] (12) -- (1);
\draw[->] (1) -- (21);
\draw[->] (212) -- (21);
\draw[->] (21) -- (2r);

\draw[->,dashed] (1) -- (2l);
\draw[->,dashed] (2r) -- (1);
\draw[->,dashed] (21) -- (12);

\end{tikzpicture}
\]
\caption{The Auslander--Reiten quiver of $A$.}
\label{fig:ARquiver}
\end{figure}
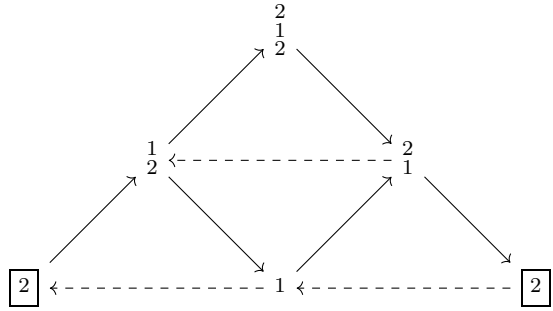

\begin{figure}
\centering
\begin{tikzpicture}
\draw[thick] (0,3) -- (0,-3);
\draw[thick] (-3,0) -- (3,0);
\draw[thick] (-3,3) -- (3,-3);
\draw (0,2.8) node[anchor=west]{$\mathcal{D}(\rep{1})$};
\draw (0,-2.8) node[anchor=east]{$\mathcal{D}(\rep{1})$};
\draw (2.6, 0) node[anchor=south]{$\mathcal{D}(\rep{2})$};
\draw (-2.6, 0) node[anchor=north]{$\mathcal{D}(\rep{2})$};
\draw (2.6, -2.4) node[anchor=south]{$\mathcal{D}\bigl(\rep{1\\2}\bigr)$};
\draw (-2.4, 2.6) node[anchor=north east]{$\mathcal{D}\bigl(\rep{2\\1}\bigr)$};

\end{tikzpicture}
\caption{The wall-and-chamber structure of $A$.}
\label{fig:w&c}
\end{figure}
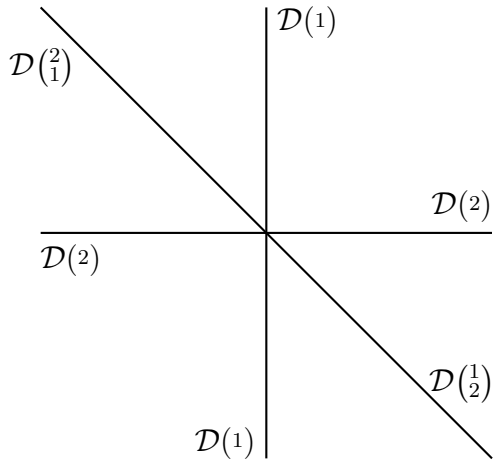

\begin{figure}
\centering
\begin{tikzpicture}
\draw[thick, ->, blue] (0,0) -- (0,-3);
\draw[thick, ->, blue] (0,0) -- (0,3);
\draw[thick, ->, blue] (0,0) -- (-3,0);
\draw[thick, ->, blue] (0,0) -- (3, 0);
\draw[thick, ->, blue] (0,0) -- (3,-3);
\draw[thick, ->, blue] (0,0) -- (-3,3);
\filldraw[blue] (0,0) circle (2pt);
\draw (0,2.7) node[anchor=west]{$g^{\rep{2\\1\\2}}$};
\draw (0,-2.5) node[anchor=east]{$-g^{\rep{2\\1\\2}}$};
\draw (2.6, 0) node[anchor=south]{$g^{\rep{1\\2}}$};
\draw (-2.6, 0) node[anchor=north]{$-g^{\rep{1\\2}}$};
\draw (2.6, -2.4) node[anchor=south]{$g^{\rep{1}}$};
\draw (0, 0) node[anchor=south west]{$g^{\rep{0}}$};
\draw (-2.4, 2.6) node[anchor=north east]{$g^{\rep{2}}$};

\end{tikzpicture}
\caption{The $g$-vector fan of $A$.}
\label{fig:g-fan}
\end{figure}
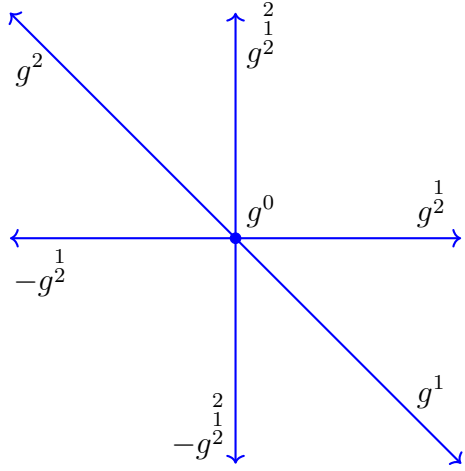
In this case we have that all the TF-equivalence classes in the wall-and-chamber structure of $A$ are of the form $\mathcal{C}_{(M,P)}$ for some $\tau$-rigid pair $(M,P)$ in $\mc A$.

The objects of $\cat{A}$ are as follows: 

\begin{align*} 
& U = \bigl[\mathcal{C}_{(\rep{0},\rep{0})}\bigr],
V = \bigl[\mathcal{C}_{(\rep{1},\rep{0})}\bigr],
W = \bigl[\mathcal{C}_{(\rep{0},\rep{2})}\bigr],\\ & 
X = \bigl[\mathcal{C}_{\bigl(\rep{1\\2},\,\rep{0}\bigr)}\bigr] = \bigl[\mathcal{C}_{\bigl(\rep{0},\,\rep{1\\2}\bigr)}\bigr], 
Y = \bigl[\mathcal{C}_{\Bigl(\rep{2\\1\\2},\,\rep{0}\Bigr)}\bigr] = \bigl[\mathcal{C}_{\Bigl(\rep{0},\,\rep{2\\1\\2}\Bigr)}\bigr], \\ & Z = 
\bigl[\mathcal{C}_{\Bigl(\rep{1},\,\rep{2\\1\\2}\Bigr)}\bigr] = \bigl[\mathcal{C}_{\bigl(\rep{1} \oplus \rep{1 \\2},\, \rep{0}\bigr)}\bigr] =\bigl[\mathcal{C}_{\bigl(\rep{2},\, \rep{1\\2}\bigr)}\bigr] =\bigl[\mathcal{C
}_{\bigl(\rep{2} \oplus \rep{2\\1\\2},\, \rep{0}\bigr)}\bigr] =\bigl[\mathcal{C}_{\bigl(\rep{1\\2} \oplus \rep{2\\1\\2},\, \rep{0}\bigr)}\bigr] =
 \bigl[\mathcal{C}_{\Bigl(\rep{0},\, \rep{1\\2} \oplus \rep{2\\1\\2}\Bigr)}\bigr].
\end{align*}

Let us study the Hom sets $\Hom_{\cat{A}}(U,X)$ and $\Hom_{\cat{A}}(X,Z)$ in more detail. 
By definition, we have that \[\Hom_{\cat{A}}(U,X)  = \biggl\{ f_{\mathcal{C}_{(\rep{0},\rep{0})} \mathcal{C}_{\bigl(\rep{1\\2},\,\rep{0}\bigr)}}, f_{\mathcal{C}_{(\rep{0},\rep{0})} \mathcal{C}_{\bigl(\rep{0},\,\rep{1\\2}\bigr)}} \biggr\} / \sim.\] Since $B(\rep{0},\rep{0}) = A$ and, as we noted in Subsection~\ref{sect:red_wcs}, $\pi_{(\rep{0},\rep{0})}$ restricts to a bijection of the TF-equivalance classes in $N_{(\rep{0},\rep{0})} = \mathbb{R}^n$ and TF-equivalence classes of $\mc A$ in $\mathbb{R}^n$ we conclude that $f_{\mathcal{C}_{(\rep{0},\rep{0})} E'}= f_{\mathcal{C}_{(\rep{0},\rep{0})} E} $ if and only if $E = E'$. Thus, \[\Hom_{\cat{A}}(U,X)  = \biggl\{ \bigl[ f_{\mathcal{C}_{(\rep{0},\rep{0})} \mathcal{C}_{\bigl(\rep{1\\2},\,\rep{0}\bigr)}} \bigr], \bigl[ f_{\mathcal{C}_{(\rep{0},\rep{0})} \mathcal{C}_{\bigl(\rep{0},\,\rep{1\\2}\bigr)}} \bigr] \biggr\}.\]
Now let us consider $\Hom_{\cat{A}}(X, Z)$, which is the set
\[ \biggl\{ 
  f_{ \mathcal{C}_{\bigl(\rep{1\\2},\,\rep{0}\bigr)}\mathcal{C}_{\bigl(\rep{1\\2} \oplus \rep{2\\1\\2},\,\rep{0}\bigr)} }, 
 f_{ \mathcal{C}_{\bigl(\rep{1\\2},\,\rep{0}\bigr)} \mathcal{C}_{\bigl(\rep{1} \oplus \rep{1\\2},\,\rep{0}\bigr)} },
  f_{ \mathcal{C}_{\bigl(\rep{0},\,\rep{1\\2}\bigr)} \mathcal{C}_{\bigl(\rep{2} ,\,\rep{1\\2}\bigr)} },
  f_{ \mathcal{C}_{\bigl(\rep{0},\,\rep{1\\2}\bigr)} \mathcal{C}_{\Bigl(0 ,\,\rep{1\\2}\oplus \rep{2\\1\\2} \Bigr)} }
\biggr\} / \sim.\]
First observe that $\lspan{ \mathcal{C}_{\bigl(\rep{1\\2},\,\rep{0}\bigr)}} = \lspan{(0,-1)}$ and   $\lspan{ \mathcal{C}_{\bigl(\rep{0},\, \rep{1\\2}\bigr)}} = \lspan{(0,1)}$. Thus, for $(x,y) \in \mathbb{R}^2$, $\nu_{ \mathcal{C}_{\bigl(\rep{1\\2},\,\rep{0}\bigr)}} (x,y) = (x,0) = \nu_{ \mathcal{C}_{\bigl(\rep{0},\,\rep{1\\2}\bigr)}} (x,y)$. 
We also compute 
\begin{align*}
\mathcal{C}_{\bigl(\rep{1\\2} \oplus \rep{2\\1\\2},\,\rep{0}\bigr)} & = \{(x, y) \in \mathbb{R}^2 \st x, y<0 \},  \\
\mathcal{C}_{\bigl(\rep{1} \oplus \rep{1\\2},\,\rep{0}\bigr)} & = \{(x, y) \in \mathbb{R}^2 \st  y<0, 0<x< -y \}, \\ 
\mathcal{C}_{\bigl(\rep{2} ,\,\rep{1\\2}\bigr)} & = \{(x, y) \in \mathbb{R}^2 \st  y>0, -y<x<0 \}, \text{ and}  \\
\mathcal{C}_{\bigl(0 ,\,\rep{1\\2}\oplus \rep{2\\1\\2} \bigr) }& = \{(x, y) \in \mathbb{R}^2 \st x, y>0 \}. 
\end{align*}
Together, we see that 
\begin{align*}
\nu_{ \mathcal{C}_{\bigl(\rep{1\\2},\,\rep{0}\bigr)}} \mathcal{C}_{\bigl(\rep{1\\2} \oplus \rep{2\\1\\2},\,\rep{0}\bigr)} &= \{(x,0) \in \mathbb{R}^2 \st  x<0 \}   = \nu_{ \mathcal{C}_{\bigl(\rep{0},\,\rep{1\\2}\bigr)}} \mathcal{C}_{\bigl(\rep{2} ,\,\rep{1\\2}\bigr)} \text{ and} \\
\nu_{ \mathcal{C}_{\bigl(\rep{1\\2},\,\rep{0}\bigr)}} \mathcal{C}_{\bigl(\rep{1} \oplus \rep{1\\2},\,\rep{0}\bigr)} &= \{(x,0) \in \mathbb{R}^2 \st  x>0 \}   = \nu_{ \mathcal{C}_{\bigl(\rep{0},\,\rep{1\\2}\bigr)}} \mathcal{C}_{\Bigl(0 ,\,\rep{1\\2}\oplus \rep{2\\1\\2} \Bigr) }.
\end{align*}
Hence, we have that 
\begin{align*}
\Hom_{\cat{A}}(U,X)=  \biggl\{ 
 &\bigl[ f_{ \mathcal{C}_{\bigl(\rep{1\\2},\,\rep{0}\bigr)}\mathcal{C}_{\bigl(\rep{1\\2} \oplus \rep{2\\1\\2},\,\rep{0}\bigr)} } \bigr] = \bigl[ f_{ \mathcal{C}_{\bigl(\rep{0},\,\rep{1\\2}\bigr)} \mathcal{C}_{\bigl(\rep{2} ,\,\rep{1\\2}\bigr)} } \bigr], \\ 
&\bigl[ f_{ \mathcal{C}_{\bigl(\rep{1\\2},\,\rep{0}\bigr)} \mathcal{C}_{\bigl(\rep{1} \oplus \rep{1\\2},\,\rep{0}\bigr)} } \bigr] = 
\bigl[  f_{ \mathcal{C}_{\bigl(\rep{0},\,\rep{1\\2}\bigr)} 
\mathcal{C}_{\Bigl(0 ,\,\rep{1\\2}\oplus \rep{2\\1\\2} \Bigr)} 
}\bigr]
\biggr\}.
\end{align*}
We do not compute the rest of the category $\cat{A}$ here. In Example~\ref{example2} we compute an equivalent category.
\end{example}

\section{Relation with the $\tau$-cluster morphism category}\label{sect:rel_w_tcm}

 We now show that the category $\cat{A}$ that we defined in the previous section is equivalent to the $\tau$-cluster morphism category $\tcm{A}$. In order to do this, we first define the following poset, which we also view as a category, just as with $TF_A$.

\begin{definition}
Let $\trc{A}$ be the poset whose objects are basic $\tau$-rigid pairs over $A$, with $(M, P) \leqslant (N, Q)$ if $M$ is a direct summand of $N$ and $P$ is a direct summand of $Q$. In this case, we write $h_{(M, P)}^{(N, Q)}$ for the unique morphism which exists from $(M, P)$ to $(N, Q)$.
\end{definition}

In a similar way to how we proceeded in the previous section, we may define a quotient of this category as follows.

\begin{definition}
Let $\qtrc{A}$ be the category defined as follows.
\begin{enumerate}[label = (\Alph*), wide]
\item The objects of $\qtrc{A}$ are equivalence classes of objects of $\trc{A}$ under the equivalence relation where $(M, P) \sim (N, Q)$ if and only if $\tps{}{M, P} = \tps{}{N, Q}$.
\item The morphisms $\Hom_{\qtrc{A}}\big([(M, P)], [(N, Q)]\big)$ consist of \[\bigcup_{\substack{(M', P') \in [(M, P)]\\ (N', Q') \in [(N, Q)]}} \Hom_{\trc{A}}((M', P'), (N', Q'))\] under the equivalence relation where \[h_{(M, P)}^{(M \oplus \widehat{M}, P \oplus \widehat{P})} \sim h_{(M', P')}^{(M' \oplus \widehat{M}', P' \oplus \widehat{P}')}\] if and only if \[\mathcal{E}_{(M, P)}^{\mc A}(\widehat{M}, \widehat{P}) = \mathcal{E}_{(M', P')}^{\mc A}(\widehat{M}', \widehat{P}'),\] noting that $\tps{}{M, P} = \tps{}{M', P'}$ due to the context.
\item The composition of $[h_{(M, P)}^{(M \oplus M', P \oplus P')}]$ and $[h_{(M \oplus M', P \oplus P')}^{(M \oplus M' \oplus M'', P \oplus P' \oplus P'')}]$ is defined to be $[h_{(M, P)}^{(M \oplus M' \oplus M'', P \oplus P' \oplus P'')}]$.
\end{enumerate}
\end{definition}

\begin{example} \label{example2}
As in Example~\ref{example1}, we consider the quiver $Q$ \[\begin{tikzcd}
1 \arrow[r, "\alpha", bend left] & 2 \arrow[l, "\beta", bend left]
\end{tikzcd}\] and the algebra $B = KQ/ \langle \beta \alpha \rangle$.
 Figure~\ref{figure:poset of B} shows Hasse quiver of the poset $\mathfrak{T}(A)$ and in Figure~\ref{fig:qtrcB} we show the category $\qtrc{A}$. In that diagram, non-black arrows with the same label (or colour)  are in the same equivalence class of morphisms. Morphisms from the initial object, $I = {[(\rep{0},\rep{0})]}$ to the terminal object, 
$T$, are obtained by concatenation of arrows under the equivalence relation that $(I \to X \to T) \sim (I \to Y \to T)$ 
if and only if the head of the arrows of $X \to T$ and $Y \to T$ point at the same representative of the equivalence class $T$. 
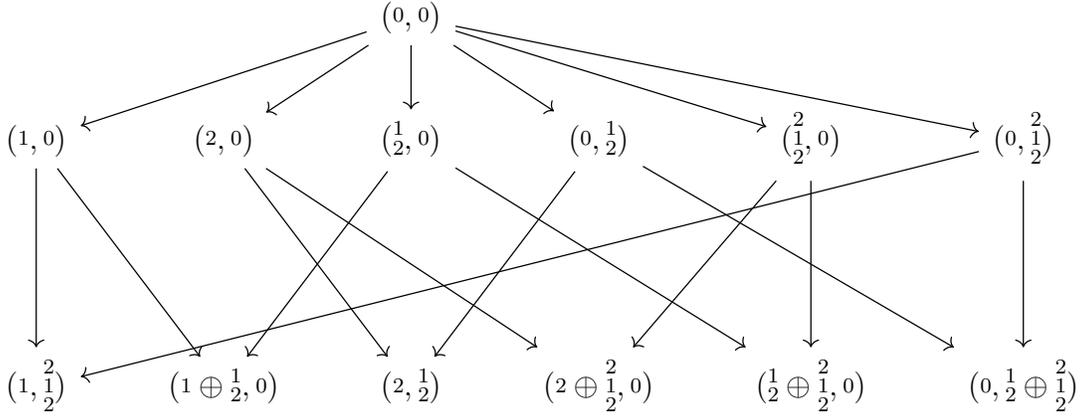
\begin{figure} 
\[\begin{tikzcd}
                                      &                                          & {(\rep{0},\rep{0})} \arrow[lld] \arrow[ld] \arrow[d] \arrow[rd] \arrow[rrd] \arrow[rrrd] &                                             &                                              &                                                  \\
{(\rep{1},\rep{0})} \arrow[ddd] \arrow[rddd] & {(\rep{2},\rep{0})} \arrow[rddd] \arrow[rrddd] & {(\rep{1\\2},\rep{0})} \arrow[rrddd] \arrow[lddd]                                  & {(\rep{0},\rep{1\\2})} \arrow[rrddd] \arrow[lddd] & {(\rep{2\\1\\2},\rep{0})} \arrow[ddd] \arrow[lddd] & {(\rep{0},\rep{2\\1\\2})} \arrow[ddd] \arrow[lllllddd] \\
                                       &                                          &                                                                              &                                             &                                              &                                                  \\
                                       &                                          &                                                                              &                                             &                                              &                                                  \\
{(\rep{1},\rep{2\\1\\2})}            & {(\rep{1}\oplus \rep{1\\2} ,\rep{0})}          & {(\rep{2},\rep{1\\2})}                                                       & {(\rep{2}\oplus \rep{2\\1\\2},\rep{0})}           & {(\rep{1\\2}\oplus \rep{2\\1\\2},\rep{0})}         & {(\rep{0},\rep{1\\2}\oplus \rep{2\\1\\2})}            
\end{tikzcd}\]
\caption{The Hasse quiver of  $\mathfrak{T}(A)$.}
\label{figure:poset of B}
\end{figure}

\begin{figure}
\[ \begin{tikzcd}[labels = very near start, labels = description]
                                                             &                                                                  & {[(\rep{0},\rep{0})]} \arrow[lld] \arrow[ld] \arrow[d] \arrow[rd] \arrow[rrd] \arrow[rrrd] &                                                                   &                                                                             &                                                                           \\
{[(\rep{1},\rep{0})]} \arrow[ddd] \arrow[rddd]               & {[(\rep{2},\rep{0})]} \arrow[rddd] \arrow[rrddd]                 & {[(\rep{1\\2},\rep{0})]} \arrow[rrddd, color = blue, "b" {description, pos=0.6}] \arrow[lddd, color = red, "a" {description, pos=0.4}] \arrow[r, equal]                  & {[(\rep{0},\rep{1\\2})]} \arrow[rrddd, color=red, "a" {description, pos=0.15}] \arrow[lddd, color = blue, "b" {description, pos=0.15}]                & {[(\rep{2\\1\\2},\rep{0})]} \arrow[ddd, color=green, , "d" {description, pos=0.3}] \arrow[lddd, color = orange, "c" {description, pos=0.5}] \arrow[r, equal]  & {[(\rep{0},\rep{2\\1\\2})]} \arrow[ddd, color=  green, "d" {description, pos=0.5}] \arrow[lllllddd, color = orange, "c"' description, near start] \\
                                                             &                                                                  &                                                                                            &                                                                   &                                                                             &                                                                           \\
                                                             &                                                                  &                                                                                            &                                                                   &                                                                             &                                                                           \\
{[(\rep{1},\rep{2\\1\\2})]} \arrow[r, equal] & {[(\rep{1}\oplus \rep{1\\2} ,\rep{0})]} \arrow[r, equal] & {[(\rep{2},\rep{1\\2})]} \arrow[r, equal]                                    & {[(\rep{2}\oplus \rep{2\\1\\2},\rep{0})]} \arrow[r, equal] & {[(\rep{1\\2}\oplus \rep{2\\1\\2},\rep{0})]} \arrow[r, equal] & {[(\rep{0},\rep{1\\2}\oplus \rep{2\\1\\2})]}                             
\end{tikzcd}\]

\caption[]{The category $\qtrc{A}$. }  
\label{fig:qtrcB}

\end{figure}

\end{example}

Instead of showing directly that the category $\qtrc{A}$ is well-defined, we show this by showing that it is equivalent to the $\tau$-cluster morphism category $\tcm{A}$.

\begin{proposition}
The $\tau$-cluster morphism category $\tcm{A}$ is equivalent to the category $\qtrc{A}$.
\end{proposition}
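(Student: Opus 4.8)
The plan is to exhibit a functor $G \colon \tcm{A} \to \qtrc{A}$ which is bijective on objects and on each $\Hom$-set and which intertwines composition; since $\tcm{A}$ is already a well-defined category (as established in the cited literature), this will simultaneously show that composition in $\qtrc{A}$ is well-defined and that $G$ is an isomorphism, hence an equivalence, of categories. On objects, I send a $\tau$-perpendicular subcategory $\mathcal{W}$ to the class $[(M,P)]$ of any $\tau$-rigid pair with $\tps{}{M,P} = \mathcal{W}$. This is well-defined and bijective essentially by fiat: two pairs lie in the same object of $\qtrc{A}$ exactly when they have the same $\tau$-perpendicular subcategory, and every object of $\tcm{A}$ is by definition of this form.

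On morphisms, given $g_{(N,Q)}^{\mathcal{W}} \colon \mathcal{W} \to \tps{\mathcal{W}}{N,Q}$, I choose a $\tau$-rigid pair $(M,P)$ with $\tps{}{M,P} = \mathcal{W}$, put $(\widehat M, \widehat P) := (\mathcal{E}_{(M,P)}^{\mc A})^{-1}(N,Q)$, and set $G(g_{(N,Q)}^{\mathcal{W}}) := [h_{(M,P)}^{(M \oplus \widehat M, P \oplus \widehat P)}]$. Independence of the choice of $(M,P)$ is immediate from the equivalence relation defining morphisms of $\qtrc{A}$: if $(M',P')$ is another representative of $\mathcal{W}$ and $(\widehat M', \widehat P') := (\mathcal{E}_{(M',P')}^{\mc A})^{-1}(N,Q)$, then $\mathcal{E}_{(M,P)}^{\mc A}(\widehat M, \widehat P) = (N,Q) = \mathcal{E}_{(M',P')}^{\mc A}(\widehat M', \widehat P')$, which is exactly the condition that $h_{(M,P)}^{(M \oplus \widehat M, P \oplus \widehat P)} \sim h_{(M',P')}^{(M' \oplus \widehat M', P' \oplus \widehat P')}$. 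The target is correct because reduction is compatible with $\tau$-perpendicular subcategories, giving $\tps{}{M \oplus \widehat M, P \oplus \widehat P} = \tps{\mathcal{W}}{N,Q}$. For full faithfulness I would check that $G$ is a bijection on each $\Hom$-set, with inverse $[h_{(M,P)}^{(M \oplus \widehat M, P \oplus \widehat P)}] \mapsto g_{\mathcal{E}_{(M,P)}^{\mc A}(\widehat M, \widehat P)}^{\tps{}{M,P}}$; this uses that $\mathcal{E}_{(M,P)}^{\mc A}$ is a bijection and that equivalence classes of morphisms of $\qtrc{A}$ with fixed source and target are precisely indexed by the value of $\mathcal{E}_{(M,P)}^{\mc A}$ on the complementary pair.

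The crux is compatibility with composition. Fix $(M,P)$ with $\tps{}{M,P} = \mathcal{W}_1$ and consider $g_1 = g_{(N_1,Q_1)}^{\mathcal{W}_1}$ and $g_2 = g_{(N_2,Q_2)}^{\mathcal{W}_2}$ with $\mathcal{W}_2 = \tps{\mathcal{W}_1}{N_1,Q_1}$. Writing $(\widehat M_1, \widehat P_1) = (\mathcal{E}_{(M,P)}^{\mc A})^{-1}(N_1,Q_1)$ and $(\widehat M_2, \widehat P_2) = (\mathcal{E}_{(M \oplus \widehat M_1, P \oplus \widehat P_1)}^{\mc A})^{-1}(N_2,Q_2)$, computing $G(g_1)$, then $G(g_2)$ using the representative $(M \oplus \widehat M_1, P \oplus \widehat P_1)$ of $\mathcal{W}_2$, and concatenating in $\qtrc{A}$ yields $[h_{(M,P)}^{(M \oplus \widehat M_1 \oplus \widehat M_2, P \oplus \widehat P_1 \oplus \widehat P_2)}]$. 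On the other hand $g_2 \circ g_1 = g_{(N_1 \oplus \widetilde N_2, Q_1 \oplus \widetilde Q_2)}^{\mathcal{W}_1}$ with $(\widetilde N_2, \widetilde Q_2) = (\mathcal{E}_{(N_1,Q_1)}^{\mathcal{W}_1})^{-1}(N_2,Q_2)$, so $G(g_2 \circ g_1) = [h_{(M,P)}^{(M \oplus \widehat M, P \oplus \widehat P)}]$ with $(\widehat M, \widehat P) = (\mathcal{E}_{(M,P)}^{\mc A})^{-1}(N_1 \oplus \widetilde N_2, Q_1 \oplus \widetilde Q_2)$. Since both classes are based at the same $(M,P)$ and $\mathcal{E}_{(M,P)}^{\mc A}$ is injective, they agree if and only if $\mathcal{E}_{(M,P)}^{\mc A}(\widehat M_1 \oplus \widehat M_2, \widehat P_1 \oplus \widehat P_2) = (N_1 \oplus \widetilde N_2, Q_1 \oplus \widetilde Q_2)$, while identity morphisms clearly correspond (take $\widehat M = \widehat P = 0$).

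The main obstacle is precisely this last identity, which is the transitivity of the reduction bijections under the nested reductions $\mc A \supseteq \mathcal{W}_1 \supseteq \mathcal{W}_2$. Concretely, one needs that $\mathcal{E}_{(M,P)}^{\mc A}$ sends summands to summands, so that its value on $(\widehat M_1 \oplus \widehat M_2, \widehat P_1 \oplus \widehat P_2)$ splits as $(N_1, Q_1) \oplus (X, Y)$ for a pair $(X,Y)$ in $\mathcal{W}_1$ with no summand of $(N_1,Q_1)$, together with the compatibility $\mathcal{E}_{(N_1,Q_1)}^{\mathcal{W}_1}(X,Y) = \mathcal{E}_{(M \oplus \widehat M_1, P \oplus \widehat P_1)}^{\mc A}(\widehat M_2, \widehat P_2) = (N_2,Q_2)$, which forces $(X,Y) = (\widetilde N_2, \widetilde Q_2)$. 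This transitivity of $\mathcal{E}$ under iterated $\tau$-tilting reduction is the substantive input, and I would invoke the corresponding results of Buan--Marsh and Buan--Hanson in \cite{BM21_wide,BH21}; everything else is bookkeeping dictated by the definitions. Once it is in hand, $G$ preserves composition, so the concatenation in $\qtrc{A}$ is the transport of the associative composition of $\tcm{A}$ along the bijection $G$ and is thus well-defined and associative, and $G$ is an isomorphism of categories, giving the desired equivalence.
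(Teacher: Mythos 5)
Your proposal is correct and follows essentially the same route as the paper: the paper constructs the inverse functor $F\colon \qtrc{A}\to\tcm{A}$, $[(M,P)]\mapsto\tps{}{M,P}$, rather than your $G\colon\tcm{A}\to\qtrc{A}$, but both arguments reduce the compatibility with composition to the same transitivity of the reduction bijections $\mathcal{E}$ under iterated $\tau$-tilting reduction, citing \cite[Theorem~6.4]{BH21} and \cite[Theorem~5.9]{BM21_wide} (equivalently \cite[Theorem~6.12]{BH21}). The remaining bookkeeping (well-definedness on objects and morphisms, essential surjectivity, fullness and faithfulness via bijectivity of the $\mathcal{E}$ maps) matches the paper's.
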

\begin{proof}
We define a functor $F \colon \qtrc{A} \to \tcm{A}$. On objects, $F$ sends the equivalence class $[(M, P)]$ to $\tps{}{M, P}$. The equivalence relation ensures that this is well-defined. On morphisms, 
\[F \colon [h_{(M, P)}^{(M \oplus \widehat{M}, P \oplus \widehat{P})}] \longmapsto g^{\mathcal{W}}_{(N, Q)}\] 
where $\mathcal{W} = \tps{}{M, P}$ and $(N, Q) = \mathcal{E}^{\mc A}_{(M, P)}(\widehat{M}, \widehat{P})$. Again, the equivalence relation on morphisms ensures that this is well-defined.

We show that $F$ respects composition. Here we take composable morphisms 
\[[h_{(M, P)}^{(M \oplus M', P \oplus P')}] \text{ and } [h_{(M \oplus M', P \oplus P')}^{(M \oplus M' \oplus M'', P \oplus P' \oplus P'')}]\] 
in $\qtrc{A}$. We must show that the composition of the images of these morphisms under $F$ is equal to the image of $[h_{(M, P)}^{(M \oplus M' \oplus M'', P \oplus P' \oplus P'')}]$, their composition in $\qtrc{A}$. 
We have that
\[F[h_{(M, P)}^{(M \oplus M', P \oplus P')}] = g_{(N', Q')}^{\mathcal{W}}\]
 where $\mathcal{W} = \tps{}{M, P}$ and $(N', Q') = \mathcal{E}^{\mc A}_{(M, P)}(M', P')$; \and \[F[h_{(M \oplus M', P \oplus P')}^{(M \oplus M' \oplus M'', P \oplus P' \oplus P'')}] = g_{(N'', Q'')}^{\mathcal{W}'}\]  where $\mathcal{W}' = \tps{}{M \oplus M', P \oplus P'}$ and $(N'', Q'') = \mathcal{E}^{\mc A}_{(M \oplus M', P \oplus P')}(M'', P'')$. 
 Since we also have $\mathcal{W}' = \tps{\mathcal{W}}{N', Q'}$ by \cite[Theorem~6.4]{BH21}, which generalises \cite[Theorem~4.3]{BM21_wide}, we have that these two morphisms
\begin{align*}
g_{(N', Q')}^{\mathcal{W}}&\colon \mathcal{W} \to \mathcal{W}' \\
g_{(N'', Q'')}^{\mathcal{W}'}&\colon \mathcal{W}' \to \tps{\mathcal{W}'}{N'', Q''}
\end{align*}
are indeed composable. Then, letting $(\widetilde{N''}, \widetilde{Q''}) = \left( \mathcal{E}^{\mathcal{W}}_{(N', Q')} \right)^{-1}(N'', Q'')$, we have that the composition of these two morphisms is $g_{(N' \oplus \widetilde{N''}, Q' \oplus \widetilde{Q''})}^{\mathcal{W}}$, since, again by \cite[Theorem~6.4]{BH21}, we have that $\tps{\mathcal{W}'}{N'' \oplus Q''} = \tps{\mathcal{W}}{(N' \oplus \widetilde{N''}, Q' \oplus \widetilde{Q''}}$. 
But then we have precisely that 
\[F[h_{(M, P)}^{(M \oplus M' \oplus M'', P \oplus P' \oplus P'')}] = g_{(N' \oplus \widetilde{N''}, Q' \oplus \widetilde{Q''})}^{\mathcal{W}},\] 
since $\mathcal{W} = \tps{}{M, P}$ and $(N' \oplus \widetilde{N''}, Q' \oplus \widetilde{Q''}) = \mathcal{E}^{\mc A}_{(M, P)}(M' \oplus M'', P' \oplus P'')$. This is because $(N', Q') = \mathcal{E}^{\mc A}_{(M, P)}(M', P')$ and
\begin{align*}
(\widetilde{N''}, \widetilde{Q''}) &=  \left( \mathcal{E}^{\mathcal{W}}_{(N', Q')} \right)^{-1}(N'', Q'') \\
&= \left( \mathcal{E}^{\mathcal{W}}_{(N', Q')} \right)^{-1}\mathcal{E}^{\mc A}_{(M \oplus M', P \oplus P')}(M'', P'') \\
&= \left( \mathcal{E}^{\mathcal{W}}_{(N', Q')} \right)^{-1}\mathcal{E}^{\mathcal{W}}_{(N', Q')} \mathcal{E}^{\mc A}_{(M, P)}(M'', P'') \\
&= \mathcal{E}^{\mc A}_{(M, P)}(M'', P'').
\end{align*}
Here the penultimate step follows from \cite[Theorem~5.9]{BM21_wide} or \cite[Theorem~6.12]{BH21}.

It is clear that $F$ is essentially surjective, since every $\tau$-perpendicular category emerges from a $\tau$-rigid object by definition. It is likewise clear that $F$ is full, since the $\mathcal{E}$ maps are bijections. Hence $F$ is an equivalence of categories, as desired.
\end{proof}

\begin{theorem}
The category $\qtrc{A}$ is equivalent to the category $\cat{A}$ defined from the wall-and-chamber structure.
\end{theorem}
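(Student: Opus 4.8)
The plan is to construct an explicit functor $G \colon \qtrc{A} \to \cat{A}$ and show it is an equivalence, exploiting the fact that both categories are quotients built from the same underlying combinatorial data: $\tau$-rigid pairs on one side and their cones $\mathcal{C}_{(M,P)}$ on the other. The crucial dictionary is provided by Subsection~\ref{sect:tau_wcs}, which tells us that $\mathcal{C}_{(M,P)}$ is a TF-equivalence class with $\W_{\mathcal{C}_{(M,P)}} = \tps{}{M,P}$, and that the assignment $(M,P) \mapsto \mathcal{C}_{(M,P)}$ is a bijection between basic $\tau$-rigid pairs and the elements of $TF_A$. So on objects I would send $[(M,P)]$ to $[\mathcal{C}_{(M,P)}]$; this is well-defined on equivalence classes precisely because $(M,P) \sim (N,Q)$ in $\trc{A}$ means $\tps{}{M,P} = \tps{}{N,Q}$, which by the object-level equivalence in $\cat{A}$ (identifying classes with the same associated wide subcategory) is exactly the condition $\mathcal{C}_{(M,P)} \sim \mathcal{C}_{(N,Q)}$.

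On morphisms, $G$ should send $h_{(M,P)}^{(N,Q)}$ (which exists when $(M,P)$ is a summand of $(N,Q)$) to $f_{\mathcal{C}_{(M,P)} \mathcal{C}_{(N,Q)}}$. For this to make sense I must first check that $(M,P) \leqslant (N,Q)$ in $\trc{A}$ implies $\mathcal{C}_{(M,P)} \leqslant \mathcal{C}_{(N,Q)}$ in $TF_A$; this follows from the construction of the cones, since the summand condition gives $\mathcal{C}_{(M,P)} \subseteq \overline{\mathcal{C}}_{(N,Q)}$, and is essentially the content proved within Lemma~\ref{lem:order=N}. The heart of the argument is to show that $G$ respects the two equivalence relations on morphisms: the relation in $\qtrc{A}$ is phrased in terms of the maps $\mathcal{E}_{(M,P)}^{\mc A}$, while the relation in $\cat{A}$ is phrased in terms of the projections $\np{E}$. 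I would bridge these using Lemma~\ref{lem:proj=red}, which identifies the orthogonal projection $\np{E}$ with the $\tau$-tilting-reduction map $\pi_E$ (up to the fixed isomorphism $\rho_E$), together with property~(4) of $\pi$ from Subsection~\ref{sect:red_wcs}, namely that $\pi_{(M,P)}$ induces a bijection between TF-equivalence classes in $N_{(M,P)}$ and those of the reduction algebra $B_{(M,P)}$. Under this bijection, the pair $(\widehat{M}, \widehat{P})$ appearing in a morphism of $\qtrc{A}$ corresponds, via $\mathcal{E}_{(M,P)}^{\mc A}$, to a $\tau$-rigid pair in $\W_{\mathcal{C}_{(M,P)}}$, whose cone is exactly $\pi_{(M,P)}(\mathcal{C}_{(N,Q)})$; matching the $\mathcal{E}$-images thus translates precisely into matching the $\np{}$-projections.

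I would then verify that $G$ is essentially surjective (immediate, since every object of $\cat{A}$ is $[\mathcal{C}_{(M,P)}]$ for some $\tau$-rigid pair by the characterisation of $TF_A$), faithful (the $\mathcal{E}$-relation and the $\np{}$-relation correspond bijectively by the previous paragraph), and full (every $f_{EF}$ arises from some $h$, again using that every element of $TF_A$ is a cone of a $\tau$-rigid pair and that inequalities of cones lift to summand relations of pairs by the second half of Lemma~\ref{lem:order=N}). Functoriality with respect to composition is then formal: composition in $\qtrc{A}$ stacks the pairs $(M,P) \leqslant (M \oplus M', \ldots) \leqslant (M \oplus M' \oplus M'', \ldots)$, which $G$ sends to the nested cones $\mathcal{C}_{(M,P)} \leqslant \mathcal{C}_{(M \oplus M', \ldots)} \leqslant \mathcal{C}_{(M \oplus M' \oplus M'', \ldots)}$, and composition in $\cat{A}$ is defined simply as $f_{EG} = f_{FG} \circ f_{EF}$, so the two composites agree on the nose.

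The main obstacle I expect is the morphism-equivalence compatibility in the second paragraph: one must carefully track that the combinatorial bijection $\mathcal{E}_{(M,P)}^{\mc A}$, defined purely in representation-theoretic terms via $\tau$-tilting reduction, coincides with the geometric identification of cones in the reduced fan furnished by $\pi_{(M,P)}$. This amounts to checking that the equivalence $\phi \colon \tps{}{M,P} \to \mc B_{(M,P)}$ intertwines $\mathcal{E}_{(M,P)}^{\mc A}$ with the identity on $\str{B_{(M,P)}}$ and that $\phi$ is compatible with the projection of cones described in property~(3) of $\pi$; once this is pinned down, every other part of the proof is routine.
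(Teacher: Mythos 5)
Your proposal is correct and follows essentially the same route as the paper: the same functor $G$ on objects and morphisms, well-definedness on objects via $\mathcal{W}_{\mathcal{C}_{(M,P)}} = \tps{}{M,P}$, and the morphism-equivalence compatibility bridged through Lemma~\ref{lem:proj=red} together with Asai's result that $\pi_{(M,P)}$ identifies the cones in $N_{(M,P)}$ with those of the reduction algebra, which is precisely how the paper matches the $\mathcal{E}$-relation with the $\nu$-relation. The ``main obstacle'' you flag at the end is exactly the step the paper discharges by citing \cite[Lemma~4.4]{Asai2021}.
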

\begin{proof}
We define a functor $G$ from $\qtrc{A}$ by sending $[(M, P)]$ to $\mathcal{C}_{(M, P)}$ and $[ h_{(M, P)}^{(M \oplus \widehat{M}, P \oplus \widehat{P})} ]$ to $[f_{\mathcal{C}_{(M, P)}\mathcal{C}_{(M \oplus \widehat{M}, P \oplus \widehat{P})}}]$.

We first show that the functor $G$ is well-defined on objects. We have that $[(M, P)] = [(M', P')]$ if and only if $\tps{}{M, P} = \tps{}{M', P'}$. Moreover, we have that $\mathcal{W}_{\mathcal{C}(M, P)} = \tps{}{M, P}$ and that $\mathcal{C}_{(M, P)} \sim \mathcal{C}_{(M', P')}$ if and only if $\mathcal{W}_{\mathcal{C}_{(M, P)}} = \mathcal{W}_{\mathcal{C}_{(M', P')}}$. Consequently, $G$ is well-defined on the objects $[(M, P)]$ of $\qtrc{A}$, since it gives equivalent TF-equivalence classes no matter which equivalence-class representative one chooses in $[(M, P)]$.

We now show that the functor $G$ is well-defined on morphisms. We have that \[[ h_{(M, P)}^{(M \oplus \widehat{M}, P \oplus \widehat{P})} ] = [ h_{(M, P)}^{(M \oplus \widehat{M'}, P \oplus \widehat{P}')} ]\] if and only if \[\mathcal{E}_{(M, P)}^{\mc A}(\widehat{M}, \widehat{P}) = \mathcal{E}_{(M, P)}^{\mc A}(\widehat{M'}, \widehat{P'}).\] We have that \[[f_{\mathcal{C}_{(M, P)}\mathcal{C}_{(M \oplus \widehat{M}, P \oplus \widehat{P})}}] = [f_{\mathcal{C}_{(M, P)}\mathcal{C}_{(M \oplus \widehat{M'}, P \oplus \widehat{P'})}}]\] if and only if \[\np{\mathcal{C}_{(M, P)}}(\mathcal{C}_{(M \oplus \widehat{M}, P \oplus \widehat{P})}) = \np{\mathcal{C}_{(M, P)}}(\mathcal{C}_{(M \oplus \widehat{M'}, P \oplus \widehat{P'})}).\] By Lemma~\ref{lem:proj=red}, we have that this is the case if and only if \[\pi_{\mathcal{C}_{(M, P)}}(\mathcal{C}_{(M \oplus \widehat{M}, P \oplus \widehat{P})}) = \pi_{\mathcal{C}_{(M, P)}}(\mathcal{C}_{(M \oplus \widehat{M'}, P \oplus \widehat{P'})}).\] By \cite[Lemma~4.4]{Asai2021}, we have that this is the case if and only if \[\mathcal{E}_{(M, P)}^{\mc A}(\widehat{M}, \widehat{P}) = \mathcal{E}_{(M, P)}^{\mc A}(\widehat{M'}, \widehat{P'}),\] as desired. This also shows that the functor $G$ is faithful.

The functor $G$ is essentially surjective by construction, since every TF-equivalence class is of the form $\mathcal{C}_{(M, P)}$ for some $\tau$-rigid pair $(M, P)$. 
The functor $G$ is moreover full, since the TF-equivalence classes giving morphisms in $\cat{A}$ are cones in $TF_{A}$, which all arise from $\tau$-rigid pairs $(M, P)$.
Hence, the functor $G$ is an equivalence of categories.
\end{proof}

\begin{corollary}\label{cor:tcm}
The category $\cat{A}$ defined from the wall-and-chamber structure is equivalent to the $\tau$-cluster morphism category $\tcm{A}$.
\end{corollary}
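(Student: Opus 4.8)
The plan is to deduce this immediately from the two preceding results by the transitivity of equivalences of categories. The Proposition establishes an equivalence $\tcm{A} \simeq \qtrc{A}$ via the functor $F$, and the Theorem establishes an equivalence $\qtrc{A} \simeq \cat{A}$ via the functor $G$. Since the composite of two equivalences of categories is again an equivalence, it suffices to paste these together through the intermediate category $\qtrc{A}$.

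Concretely, I would fix a quasi-inverse $G^{-1} \colon \cat{A} \to \qtrc{A}$ of the equivalence $G \colon \qtrc{A} \to \cat{A}$ from the Theorem, and then form the composite $F \circ G^{-1} \colon \cat{A} \to \tcm{A}$, where $F \colon \qtrc{A} \to \tcm{A}$ is the functor from the Proposition. As both $F$ and $G$ are equivalences, so is $F \circ G^{-1}$, which gives the desired equivalence $\cat{A} \simeq \tcm{A}$. One could equally well compose $F$ with a quasi-inverse of $G$ in the other order; the choice is immaterial.

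There is no genuine obstacle at this stage, as the entire substance of the corollary has already been discharged in establishing that both $\cat{A}$ and $\tcm{A}$ are equivalent to $\qtrc{A}$. The purpose of the intermediate category $\qtrc{A}$ is precisely to act as a bridge, allowing the geometric category $\cat{A}$, built from the $g$-vector fan and TF-equivalence classes, to be matched with the representation-theoretic category $\tcm{A}$, built from $\tau$-rigid pairs and $\tau$-perpendicular subcategories, without constructing a direct comparison functor between the two. In particular, the associativity of composition in $\cat{A}$, which is immediate from its construction, transports along these equivalences to give associativity in $\tcm{A}$, yielding the promised simplified proof that the $\tau$-cluster morphism category is well-defined.
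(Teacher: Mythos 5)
Your proposal is correct and matches the paper exactly: the corollary is stated without proof precisely because it follows immediately by composing the equivalence $\tcm{A} \simeq \qtrc{A}$ from the Proposition with the equivalence $\qtrc{A} \simeq \cat{A}$ from the Theorem. Nothing further is needed.
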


\newcommand{\etalchar}[1]{$^{#1}$}


\begin{thebibliography}{PPPP19}

\bibitem[AHI{\etalchar{+}}22]{AHIKM2022}
Toshitaka Aoki, Akihiro Higashitani, Osamu Iyama, Ryoichi Kase, and Yuya
  Mizuno.
\newblock Fans and polytopes in tilting theory {I}: Foundations.
\newblock {\em arXiv preprint arXiv:2203.15213}, 2022.

\bibitem[AIR14]{AIR14}
Takahide Adachi, Osamu Iyama, and Idun Reiten.
\newblock {$\tau$}-tilting theory.
\newblock {\em Compos. Math.}, 150(3):415--452, 2014.

\bibitem[Asa21]{Asai2021}
Sota Asai.
\newblock The wall-chamber structures of the real {G}rothendieck groups.
\newblock {\em Adv. Math.}, 381:107615, 2021.

\bibitem[BH21]{BH21}
Aslak~Bakke Buan and Eric~J Hanson.
\newblock $\tau$-perpendicular wide subcategories.
\newblock {\em arXiv preprint arXiv:2107.01141}, 2021.

\bibitem[BKT14]{BKT}
Pierre Baumann, Joel Kamnitzer, and Peter Tingley.
\newblock Affine {M}irkovi\'{c}-{V}ilonen polytopes.
\newblock {\em Publ. Math. Inst. Hautes \'{E}tudes Sci.}, 120:113--205, 2014.

\bibitem[BM21a]{BM21_wide}
Aslak~Bakke Buan and Bethany Marsh.
\newblock A category of wide subcategories.
\newblock {\em Int. Math. Res. Not. IMRN}, (13):10278--10338, 2021.

\bibitem[BM21b]{BM21_tau}
Aslak~Bakke Buan and Bethany~Rose Marsh.
\newblock {$\tau$}-exceptional sequences.
\newblock {\em J. Algebra}, 585:36--68, 2021.

\bibitem[B{\o}r21]{B21}
Erlend~D B{\o}rve.
\newblock Two-term silting and $\tau$-cluster morphism categories.
\newblock {\em arXiv preprint arXiv:2110.03472}, 2021.

\bibitem[Bri17]{BridgelandScat}
Tom Bridgeland.
\newblock Scattering diagrams, {H}all algebras and stability conditions.
\newblock {\em Algebr. Geom.}, 4(5):523--561, 2017.

\bibitem[BST19]{BST19}
Thomas Br\"{u}stle, David Smith, and Hipolito Treffinger.
\newblock Wall and chamber structure for finite-dimensional algebras.
\newblock {\em Adv. Math.}, 354:106746, 31, 2019.

\bibitem[Dic66]{Dickson1966}
Spencer~E. Dickson.
\newblock A torsion theory for {A}belian categories.
\newblock {\em Trans. Amer. Math. Soc.}, 121:223--235, 1966.

\bibitem[DIJ19]{DIJ19}
Laurent Demonet, Osamu Iyama, and Gustavo Jasso.
\newblock {$\tau$}-tilting finite algebras, bricks, and {$g$}-vectors.
\newblock {\em Int. Math. Res. Not. IMRN}, (3):852--892, 2019.

\bibitem[DIR{\etalchar{+}}18]{DIRRT}
Laurent Demonet, Osamu Iyama, Nathan Reading, Idun Reiten, and Hugh Thomas.
\newblock Lattice theory of torsion classes, 2018.

\bibitem[HI21]{HI21}
Eric~J. Hanson and Kiyoshi Igusa.
\newblock {$\tau$}-cluster morphism categories and picture groups.
\newblock {\em Comm. Algebra}, 49(10):4376--4415, 2021.

\bibitem[IT17]{IT17}
Kiyoshi Igusa and Gordana Todorov.
\newblock Signed exceptional sequences and the cluster morphism category.
\newblock {\em arXiv preprint arXiv:1706.02041}, 2017.

\bibitem[ITW16]{ITW16}
Kiyoshi Igusa, Gordana Todorov, and Jerzy Weyman.
\newblock Picture groups of finite type and cohomology in type ${A}_n$.
\newblock {\em arXiv preprint arXiv:1609.02636}, 2016.

\bibitem[Jas15]{Jasso15}
Gustavo Jasso.
\newblock Reduction of {$\tau$}-tilting modules and torsion pairs.
\newblock {\em Int. Math. Res. Not. IMRN}, (16):7190--7237, 2015.

\bibitem[Kin94]{King94}
Alistair~D. King.
\newblock Moduli of representations of finite-dimensional algebras.
\newblock {\em Quart. J. Math. Oxford Ser. (2)}, 45(180):515--530, 1994.

\bibitem[MT20]{MT20}
Octavio Mendoza~Hern{\'a}ndez and Hipolito Treffinger.
\newblock Stratifying systems through {{\(\tau\)}}-tilting theory.
\newblock {\em Doc. Math.}, 25:701--720, 2020.

\bibitem[M{\v{S}}17]{MS17}
Frederik Marks and Jan {\v{S}}{\v{t}}ov\'{\i}\v{c}ek.
\newblock Torsion classes, wide subcategories and localisations.
\newblock {\em Bull. Lond. Math. Soc.}, 49(3):405--416, 2017.

\bibitem[MST23]{MST23}
Octavio Mendoza, Corina S{\'a}enz, and Hipolito Treffinger.
\newblock Stratifying systems and {{\(g\)}}-vectors.
\newblock {\em J. Pure Appl. Algebra}, 227(3):16, 2023.
\newblock Id/No 107212.

\bibitem[PPPP19]{PPPP}
Arnau Padrol, Yann Palu, Vincent Pilaud, and Pierre-Guy Plamondon.
\newblock Associahedra for finite type cluster algebras and minimal relations
  between g-vectors.
\newblock {\em arXiv preprint arXiv:1906.06861}, 2019.

\bibitem[{Tre}19]{Treffinger2019}
Hipolito {Treffinger}.
\newblock {On sign-coherence of \(c\)-vectors}.
\newblock {\em {J. Pure Appl. Algebra}}, 223(6):2382--2400, 2019.

\bibitem[Tre21]{TSurvey}
Hipolito Treffinger.
\newblock $\tau$-tilting theory -- an introduction.
\newblock {\em arXiv preprint arXiv:2106.00426}, 2021.

\bibitem[Woo10]{Woolf10}
Jonathan Woolf.
\newblock Transversal homotopy theory.
\newblock {\em Theory Appl. Categ.}, 24:No. 7, 148--178, 2010.

\end{thebibliography}
\end{document}